\newtheorem{theorem}{Theorem}
\newtheorem{lemma}{Lemma}
\newtheorem{corollary}{Corollary}
\newcommand{\floor}[1]{\lfloor #1 \rfloor}
\newcommand{\wythoff}[0]{Wythoff's Nim}
\title{Wythoff's Nim with Finite Alterations}
\author{Mirabel Hu \and Daniel Sleator \and William Tsin}
\begin{document}
\maketitle

\begin{abstract}
Wythoff's Nim is a variant of 2-pile Nim in which players are allowed
to take any positive number of stones from pile 1, or any positive number of
stones from pile 2, or the same positive number from both piles.  The
player who makes the last move wins.  It is well-known that the
P-positions (losing positions) are precisely those where the two piles
have sizes $\{\floor{\phi n}, \floor{\phi^2n}\}$ for some integer
$n\geq 0$, and $\phi = (1+\sqrt{5})/2 = 1.6180\cdots$.

In this paper we consider an altered form of Wythoff's Nim where an
arbitrary finite set of positions are designated to be P or N
positions.  The values of the remaining positions are computed in the
normal fashion for the game.  We prove that the set of P-positions of
the altered game closely resembles that of a translated normal Wythoff
game.  In fact the fraction of overlap of the sets of P-positions of
these two games approaches $1$ as the pile sizes being considered go
to infinity.

\end{abstract}

%\section{Altering the Base Case of Wythoff's Game}

\section{Introduction}

Wythoff's Nim~\cite{wythoff1907modification} %(\wythoff{} going forward)
is a variant of two-pile Nim.  The state of the game is
defined by a pair of non-negative integers $(x,y)$, the number of
stones in each of the piles.  Players alternate moving, and a move
consists of removing a positive number of stones from one of the
piles, or alternatively (if both piles are non-empty) removing the
same positive number of stones from both piles.  The player who makes
the last move wins.

A state is called an N-position if the next player has a winning
strategy.  And it is called a P-position if the previous player has a
winning strategy.
% The following figure shows the P-positions for
% games with piles up to size 20.  The lower left corner is $(0,0)$.

\begin{figure}[hbt!]
  \centering
  \includegraphics[width=2.5in]{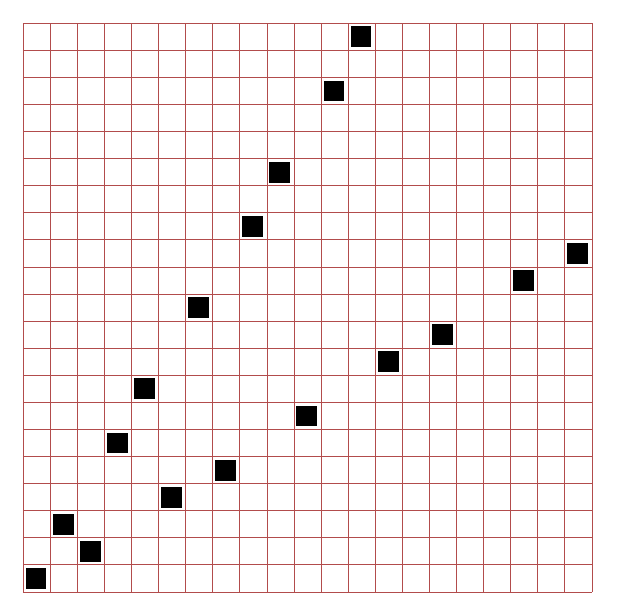}
  \captionsetup{width=.9\linewidth}
  \caption{The P-positions for \wythoff{} with piles up to size 20.  The
    lower left corner is $(0,0)$.}
  \label{fig:wythoff20}
\end{figure}

The P-positions are comprised of two linear beams defined by
the sets $\{(\floor{\phi^2 n}, \floor{\phi n})\} \mid n\geq 0\}$
and $\{(\floor{\phi n}, \floor{\phi^2 n})\} \mid n\geq 0\}$. Here
$\phi = (1+\sqrt{5})/2 = 1.6180\cdots$.  Considering the upper beam,
note that $\floor{\phi^2n} = \floor{(1+\phi) n} = n + \floor{\phi n}$.
It follows that in progressing from one P-position to the next, the
$(dx,dy)$ of each step is either $(1,2)$ or $(2,3)$.  Because of the
irrationality of $\phi$ the pattern of steps sizes is non-periodic.

The following recursive process can be used to label every position
$(x,y)$ for $x,y\geq 0$ as a P or an N position.  Compute the label
for position $(x,y)$ after all positions $(x',y')$ with with $x'\leq
x$, $y'\leq y$, and $(x',y') \neq (x,y)$ have already been computed.
The set of positions reachable from $(x,y)$ in one move are those
obtained by removing one or more from $x$, or removing one or more
from $y$, or removing $z\geq 1$ from both $x$ and $y$.
% $\{(x-1, y),\ldots (0,y), (x,y-1),\ldots (x,0), (x-1,y-1),
% (x-2,y-2), \ldots, (x-\min(x,y), y-\min(x,y))\}$.
The label for $(x,y)$ is N if there exists a P-position that is
reachable from $(x,y)$ in one move, otherwise the label of $(x,y)$ is
P.

% is a well-studied\footnote{See
%   the article ``Wythoff Visions'' from \cite{larsson2019games}}
% with the same rules for moving (described
% above),

All of this is well-known.  In fact a cornucopia of variants of
\wythoff{} have been invented and studied~\cite{withoff-visions:2017}.
%\footnote{For example, see the
%  ``Wythoff Visions'' chapter of~\cite{larsson2019games}.}.
In this paper we advance the study of \wythoff{} not by changing the rules, but
by considering what happens if we change the initial conditions.  More
specifically a finite list of game states (i.e. the two pile sizes
$(x,y)$) are declared at the outset to be P or N positions.  We'll
call such a game an \textit{altered} \wythoff{} game.  The specific
alterations are represented by two finite disjoint sets $\mathcal{P}$
and $\mathcal{N}$, which indicate the P-positions and N-positions
dictated by the alteration.

Given these alterations, the process of determining the labeling of the
remaining positions is exactly the one described above.  Except that
we take for granted the labels of the altered states, and do not
re-compute them.

For example the mis\`{e}re form of \wythoff{} is the one in which
the player unable to move is the winner.  This is obtained from \wythoff{}
by using alteration sets $(\mathcal{P}, \mathcal{N}) = (\{\},
\{(0,0)\})$.  It's easy to see that this alteration does not
materially change the game.  Specifically the only change is that the
P-positions near the origin (in the box $\{0,1,2\} \times \{0,1,2\}$)
change from $\{(0,0), (1,2), (2,1)\}$ to $\{(0,1), (1,0), (2,2)\}$.
These alternative P-positions in the $3\times 3$ box cover the same
rows (i.e. $x\in\{0,1,2\}$) the same columns (i.e. $y\in\{0,1,2\}$)
and the same diagonals (i.e. $x-y\in\{-1,0,1\}$) as the original set
of P-positions.  Thus the P-positions outside of this box will be
exactly the same as those for \wythoff{}.

% This is not suprising because the same phenomenon happens in
% \textsc{Nim}.  It's well known that the mis\`{e}re form of the game is
% identical to the regular form of the game until near the very end.

What is more surprising is that \textit{any} alteration of
\wythoff{} has \textit{almost} the same set of P-positions as \wythoff{} but
shifted horizontally and/or vertically.  By almost we
mean that as bigger and bigger balls around the origin of the $2$-d
plane are considered, the fraction of points within it that are
P-positions of one of the games but not of the other goes to zero.

\begin{figure}[hbt!]
  \centering
  \includegraphics[width=5.0in]{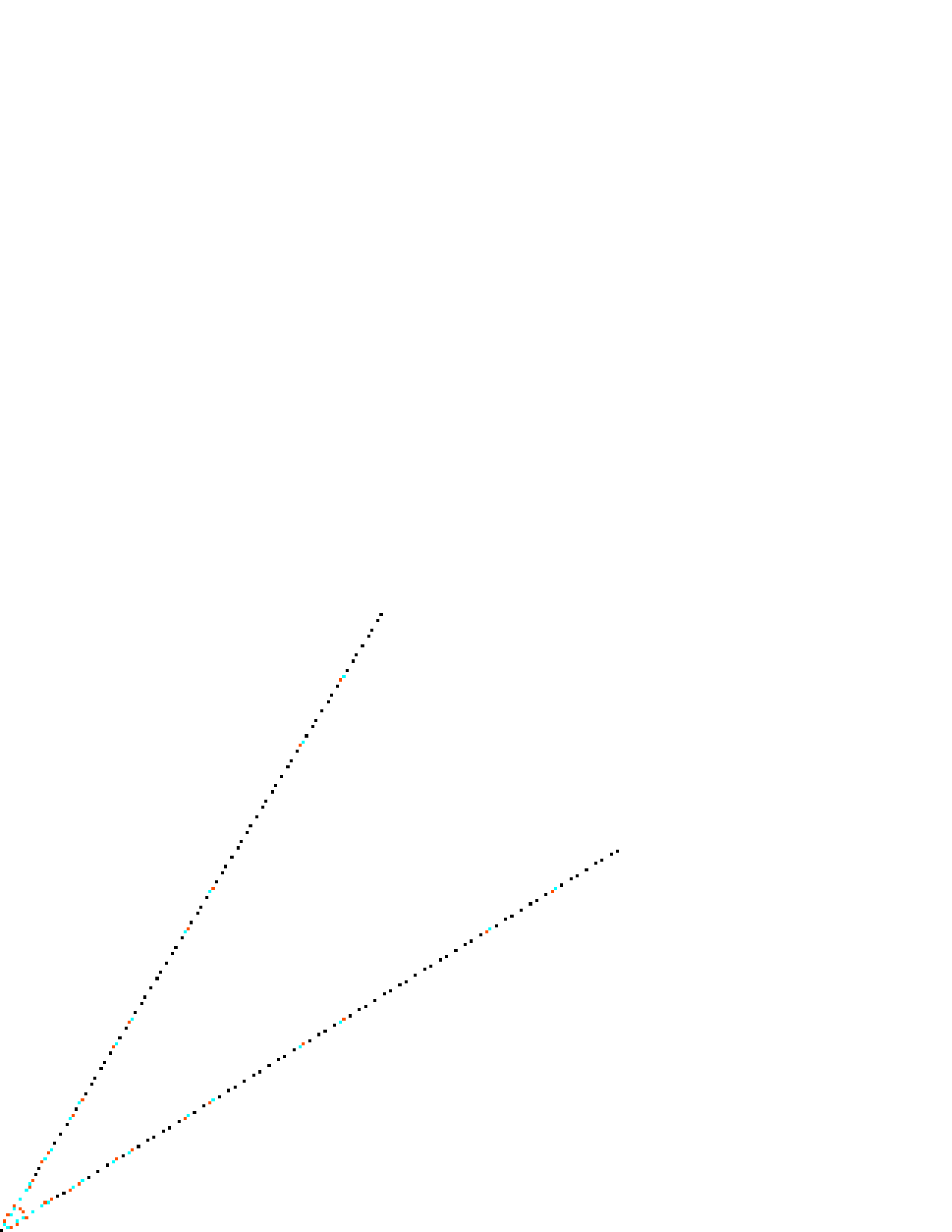}
  \captionsetup{width=.9\linewidth}
  \caption{This diagram shows the difference between \wythoff{} and an
    altered \wythoff{} game.  In the latter game, the lower left $3
    \times 3$ square is all N-positions except for $(0,0)$ which is a
    P-position.  The blue cells are the P-positions of \wythoff{}, the
    red are those of the altered \wythoff{}, and the black cells are
    the P-positions that the two games have in common.  The colored
    ``defects'' which occur in the two beams, will continue to occur
    forever, but will be spread farther and farther apart.  (If you're
    viewing this as a PDF document, you should be able to zoom in to
    see it more clearly.)}
  \label{fig:two-game-diff}
\end{figure}

In section 2 we define the order in which the game is computed, and
present some notation.  Section 3 shows how the crucial part of the
evolution of the the game can be modeled by evolving bitstrings.
Specifically we define the \textit{Wythoff update} which takes a bit
string $S$ and if the first character is a \texttt{1} it is removed
and \texttt{01} is added to the end of the string.  And if the first
character is \texttt{0}, then it is changed to \texttt{1} and a
\texttt{0} is appended to the end of the strting.  In section 4 we
prove a number if useful properties of the Wythoff update.  In section
5 we prove the Unique Offset Theorem which says that the set of
P-positions of any altered game of \wythoff{} is
\textit{asymptotically equal} to the unaltered game with a unique
translation.  Final remarks appear in section 6.

\section{Definitions for Computing and Analyzing altered \wythoff{}}
To prove our results we're going to lay out in detail a specific way
in which the P-positions of the game can be computed.  And we'll show
that as we move into the un-altered region, the game will evolve to
become more and more ordered in specific ways that we will describe.
This eventually will lead us to our main theorem, which says roughly
that any altered game gets closer and closer to the unaltered game.

Let $m_x$ and $m_y$ be such that all the altered game states
$\mathcal{P}$ and $\mathcal{N}$ (described above) are within the box
$[0,m_x-1] \times [0,m_y-1]$.

We will be computing the game's P-positions left to right, column
by column.  By the time we've finished a column, we will have computed
a finite number of P-positions in that column.  All the other
positions in that column are N-positions.  We compute a column $t$ (given
that all the ones to its left have been computed) as follows:
\begin{quote}
  We work our way up, starting from row 0.  For each cell, if it's in
  $\mathcal{P}$ or $\mathcal{N}$ we label it as such, and continue.
  If it's not, then we compute its label as follows: It's P if all
  those in the same column below it are N positions, AND all those in
  the same row to the left of it are N positions, AND all those
  diagonally down and to the left are N positions.  Otherwise it's an
  N position.
\end{quote}
The process terminates when we reach row $m_y$ or above AND at least
one P position has been found in this column.  The process is
guaranteed to terminate because for a column $t$ the maximum height of
a P-position in that column is bounded by $m_y + 2t$.

Let's consider the simpler case of computing a column $t \geq m_x$ (so
all of $\mathcal{P}$ and $\mathcal{N}$ are strictly to the left of
$t$).  A column $t$ is called \textit{natural} if $t \geq m_x$.
We know that the column must contain exactly one P-position.  The
problem is to find it.  We work our way up from row 0, looking for
that P-position as follows:
\begin{quote}
  If the row we're in contains only N-positions, and the diagonal of
  slope 1 down and to the left also contains only N-positions, then
  this point is the P-position in column $t$.
\end{quote}
Say we're testing a row $y$, using the above method.
The only information (about what has been computed so far)
needed to do the test is (1) does row $y$ already contain a
P-position?  And also does the diagonal through $(t-1,y-1), (t-2,
y-2),\ldots$ already contain a P-position?

This information can be captured by two boolean arrays, which we call
\texttt{row$_{t-1}$[]} and \texttt{diag$_{t-1}$[]}.
\texttt{row$_{t-1}$[$y$]} is TRUE if and only
if one of the columns to the left of $t$ contains a P-position in
row $y$.  And \texttt{diag$_{t-1}$[$y-1$]} is TRUE if and only if the
diagonal of slope 1 ending at $(t-1,y-1)$) contains a P-position.
Once the P-position in column $t$ has been computed, the two arrays
can be updated to reflect this change going forward.
(\texttt{diag$_{t}$[]} is obtainied by shifting
\texttt{diag$_{t-1}$[]} up by 1, then editing it for the result of the
P-position found in column $t$.   \texttt{row$_{t}$[]} is the same as
\texttt{row$_{t-1}$[]} except at the position where the new P-position
was found.)

We're going to need to keep track of another thing about these arrays.
The variables $\ell_{t}$ and $u_{t}$ are functions of the
\texttt{diag$_{t}$[]}.  Specifically $\ell_{t}$ is the minimim row
$y$ such that \texttt{diag$_{t}$[$y$]} is TRUE, and $u_{t}$ is the
maximum row $y$ such that \texttt{diag$_{t}$[$y$]} is TRUE.

\begin{figure}[hbt!]
  \centering
  \includegraphics[width=3in]{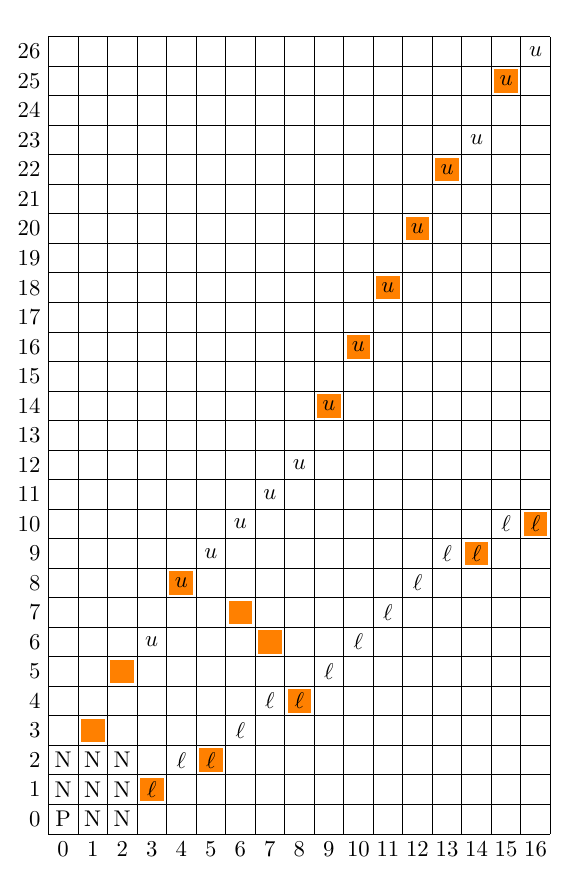}
  \captionsetup{width=.9\linewidth}
  \caption{This figure illustrates the computation of the P-positions
    and the definitions of $\ell_t$ and $u_t$, and the concept of
    saturation.  The cells labeled N and P are the elements of
    $\mathcal{P}$ and $\mathcal{N}$.  (This is the altered game that
    is shown in Figure~\ref{fig:two-game-diff}.)  The cells marked
    with an orange square are the computed P-positions.  In the
    columns from $t=3$ to $t=16$ the values of $\ell$ and $u$ are
    shown.  Beginning at column 3, all the columns are natural.  Only
    in columns 6 and 7 are the computed P-positions strictly between
    $\ell$ and $u$.  In all other cases the computed P-position is
    either $\ell$ or $u$.  The P-position in column 7 causes (for all
    subsequent columns) the \texttt{diag[]} array to be saturated,
    meaning that all of the diagonals between $\ell$ and $u$ contain a
    P-position.  }
  \label{fig:saturation}
\end{figure}

\section{Convergence Toward the Natural Wythoff Game}

\begin{lemma}
  \label{lemma:row-fill-property}  
  After a natural column $t$ is computed we know that for
  all $0 \leq y < \ell_t$ that \texttt{row}$_{\,t}[y]$ is TRUE.
  And we know that for all $y > u_t$ that \texttt{row}$_{\,t}[y]$ is FALSE.
\end{lemma}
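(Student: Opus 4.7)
The plan is to prove both halves by direct inspection of one step of the column-by-column computation; I do not think induction on $t$ is needed. Fix a natural column $t$, and let $p_t$ denote the unique row where the P-position in column $t$ is placed.

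For the first half ($y < \ell_t \Rightarrow \texttt{row}_{\,t}[y]$ is TRUE), I would use the fact that the procedure scans rows upward and must reject every row $y < p_t$ by finding either $\texttt{row}_{\,t-1}[y]$ TRUE or $\texttt{diag}_{\,t-1}[y-1]$ TRUE (for $y=0$ the diagonal clause is vacuous, so the row clause alone fires). The update rules give $\texttt{row}_{\,t}[y] = \texttt{row}_{\,t-1}[y]$ and $\texttt{diag}_{\,t}[y] = \texttt{diag}_{\,t-1}[y-1]$ for every $y < p_t$, so the rejection condition rewrites as the disjunction $\texttt{row}_{\,t}[y] \vee \texttt{diag}_{\,t}[y] = \text{TRUE}$ for all $y < p_t$. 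The new P-position at $(t,p_t)$ forces $\texttt{diag}_{\,t}[p_t]$ TRUE, hence $\ell_t \leq p_t$. Thus any $y < \ell_t$ satisfies $y < p_t$ and, by definition of $\ell_t$, has $\texttt{diag}_{\,t}[y]$ FALSE; the disjunction then forces $\texttt{row}_{\,t}[y]$ TRUE.

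For the second half ($y > u_t \Rightarrow \texttt{row}_{\,t}[y]$ is FALSE), I would argue by contradiction. If $\texttt{row}_{\,t}[y]$ were TRUE, then some P-position $(s,y)$ exists with $s \leq t$ (it could be computed or it could come from $\mathcal{P}$; it does not matter which). The slope-$1$ diagonal through $(s,y)$ meets column $t$ at row $y + (t - s) \geq y$, so $\texttt{diag}_{\,t}[y + (t-s)]$ is TRUE. But then $u_t \geq y + (t - s) \geq y > u_t$, a contradiction.

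The only mildly delicate point is the edge case $y = 0$, where the diagonal index $y - 1 = -1$ is out of range; this is harmless because we may treat $\texttt{diag}_{\,t-1}[-1]$ as FALSE, so a failed P-test at row $0$ simply amounts to $\texttt{row}_{\,t-1}[0]$ being TRUE, which is exactly what the disjunction above asserts. Apart from this bit of bookkeeping, the argument is essentially mechanical and I do not foresee any hidden obstacle.
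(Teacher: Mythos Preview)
Your proof is correct and follows essentially the same approach as the paper's: for the first half you both argue that rows below $\ell_t$ were rejected during the upward scan but could not have been rejected on diagonal grounds, hence on row grounds; for the second half you both derive a contradiction by following the slope-$1$ diagonal from the offending P-position up to column $t$. Your version is simply more explicit about the index shift between the time-$(t-1)$ and time-$t$ arrays, which the paper glosses over.
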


\begin{proof}
  Consider the first part -- for $0 \leq y < \ell_t$ that we must have
  \texttt{row}$_{\,t}[y] =$ TRUE.  If $\ell_t = 0$ the statement is
  vacuously true.  So suppose $\ell_t \geq 1$.  Let $(t,y)$ be the
  P-position picked in column $t$.  We know $\ell_t \leq y$, because
  the diagonal through $(t,y)$ is used, and $\ell_t$ represents the
  lowest diagonal used.  When searching for the P-position the
  algorithm rejected $(t,0), (t,1), \ldots, (t,\ell_t-1)$.  None of
  those are on a used diagonal.  The only explanation for why they
  were rejected is because those rows are used.

  For the second part, for the sake of contradiction, suppose there is
  a row $y$ with $y > u_t$ and \texttt{row}$_{\,t}[y] =$ TRUE.  Let
  $t'\leq t$ be the column containing that P-position,  i.e. $(t',y)$
  is a P-position.  It immediately follows that the diagonal through
  $(t,y+t-t')$ is used.  But $y+t-t' \geq y > u_t$, contradicting the
  requirement that $u_t$ represent the highest used diagonal.
\end{proof}

A column $t$ is said to be \textit{saturated} if
\texttt{diag$_{\,t}$[$y$]} $=$ TRUE iff $\ell_t \leq y \leq u_t$.  In
this situation the values of $\ell_t$ and $u_t$ completely
characterize the set of diagonals that are used.

\begin{lemma}[Saturation Lemma]
  \label{lemma:saturation}
  There exists a natural column $t$ such that $t$ and all subsequent
  columns are saturated.
\end{lemma}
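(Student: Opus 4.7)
The plan is to introduce the non-negative integer potential $g_t := (u_t - \ell_t + 1) - |D_t|$, the number of FALSE entries (``holes'') of $\texttt{diag}_{\,t}$ inside $[\ell_t, u_t]$, and to show that (i) $g_t$ is weakly decreasing as $t$ ranges over natural columns and (ii) $g_t$ eventually reaches zero. Since saturation perpetuates itself once achieved, these together imply the lemma.

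For (i), I classify the row $y_t$ chosen by the algorithm. Lemma~\ref{lemma:row-fill-property} lets the algorithm skip past rows below $\ell_{t-1}$, and $y = u_{t-1}+2$ is always available (its row and its diagonal are both free), so $y_t \in [\ell_{t-1}, u_{t-1}+2]$. Rows $\ell_{t-1}+1$ and $u_{t-1}+1$ are blocked by the diagonals through $\ell_{t-1}$ and $u_{t-1}$, leaving $y_t \in \{\ell_{t-1}\} \cup [\ell_{t-1}+2, u_{t-1}] \cup \{u_{t-1}+2\}$ -- the three cases \emph{low}, \emph{gap-fill} (where $y_t-1$ must be a pre-existing hole of $\texttt{diag}_{\,t-1}$), and \emph{high}. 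A direct count of the changes to $u_t - \ell_t + 1$ and $|D_t|$ shows that low and high leave $g_t$ fixed while gap-fill strictly decreases it by $1$.

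For (ii), assume for contradiction that $g_t \equiv g^* > 0$ for all $t \geq t_0$. Then every step past $t_0$ is low or high, and since a low at $t$ places $\ell_{t-1} = \ell_t$ into $\texttt{row}_{\,t}$ and thereby blocks a low at $t+1$, the successive high-step times $s_1 < s_2 < \cdots$ satisfy $\delta_k := s_{k+1} - s_k \in \{1,2\}$. A high at $s_k$ requires, for every gap row $r$ of $\texttt{diag}_{\,s_k - 1}$, that $r+1 \in \texttt{row}_{\,s_k-1}$ -- otherwise the algorithm instead picks $r+1$ and does a gap-fill, contradicting the stuck assumption. A size argument rules out the sources of $r+1$ other than an earlier high-step contribution $u_s$: the gap rows translate upward by $1$ per column, so they eventually exceed every element of the finite $\texttt{row}_{\,t_0}$, and low-added rows $\ell_{s-1}$ lie strictly below $\ell_t$. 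Writing $u_s = r+1$ in terms of cumulative counts yields, for each gap at distance $D \geq 1$ from the top of $\texttt{diag}_{\,t_0}$, the arithmetic equation $s_k - D = s_{j_k} + j_k$ that must be solvable for every sufficiently large $k$.

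Comparing this equation at $k$ and $k+1$ -- using $\delta_k \in \{1,2\}$ together with the fact that $j \mapsto s_j + j$ strictly increases with successive differences $\geq 2$ -- forces $j_{k+1} - j_k = 1$ and $\delta_k = \delta_{j_k} + 1$, whence $\delta_k = 2$ and $\delta_{j_k} = 1$ for every large $k$. But $j_k \to \infty$, so for $k$ large enough $j_k$ itself falls in the ``large $k$'' range, forcing $\delta_{j_k} = 2$ and contradicting $\delta_{j_k} = 1$. Hence $g_t$ strictly decreases below $g^*$ at some point, and iterating yields a natural $t^*$ with $g_{t^*} = 0$. Saturation persists thereafter because, once $D_t$ is contiguous, the only available $y_{t+1}$ values are $\ell_t$ and $u_t + 2$, and a direct check shows both preserve contiguity. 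The main obstacle is the algebraic bookkeeping in the last two paragraphs -- carefully ruling out every source of $r+1$ in $\texttt{row}$ other than prior $u_s$-terms, and managing the constraints from all $g^*$ gaps simultaneously when $g^* \geq 2$.
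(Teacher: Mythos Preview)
Your proof is correct, and the opening is the same as the paper's: both classify the P-position of a natural column into three cases (your low/gap-fill/high are the paper's Cases~1/3/2), observe that only gap-fill decreases the hole count, note that low cannot occur twice in a row, and then argue that gap-fill must eventually occur. The divergence is entirely in this last step.

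The paper proceeds \emph{constructively}: a high step (preceded by a low step, which must occur since Case~1 cannot repeat) produces two \emph{adjacent} unused rows in $[\ell_t,u_t]$; any fixed unused diagonal eventually meets these two rows at cells $(x,y)$ and $(x+1,y+1)$, and when the computation reaches column $x$ or $x+1$ one of these is forced to be the P-position, yielding a gap-fill. Your approach is \emph{algebraic and by contradiction}: you fix one gap at depth $D$, note that at every high step $s_k$ the row $r+1$ above the gap must already be covered, argue that for large $k$ the only possible source is a prior high-step row $u_{s_j}$, encode this as $s_{j_k}+j_k = s_k - D$, and extract from the difference equation the self-contradictory pair $\delta_k = 2$, $\delta_{j_k}=1$ with $j_k \to \infty$.

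Your route is more formal and sidesteps the paper's somewhat imprecise Observation~2 (a single Case~2 step really only introduces \emph{one} new unused row in the range; getting two adjacent ones tacitly uses the preceding Case~1). The paper's route is shorter, more geometric, and in principle gives an explicit column by which the next gap-fill must happen. Finally, your stated worry about ``managing the constraints from all $g^*$ gaps simultaneously'' is unnecessary: your contradiction uses only a single gap, and the argument goes through verbatim regardless of how many other gaps are present.
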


\begin{proof}
  For a column $t$, let $D_t$ be the number of unused diagonals ending
  at $(t,y)$, where $\ell_t \leq y \leq u_t$.  The column is not
  saturated iff $D_t>0$.
    
  Let $t$ be a natural column for which we're about to compute its
  P-position.  Suppose that $t-1$ is unsaturated.  There are three cases to
  consider based on where the P-position in column $t$ can reside.

  \begin{description}
  \item[Case 1:] The P-position is at $y = \ell_{t-1}$.  In this case
    $(\ell_t, u_t) = (\ell_{t-1}, u_{t-1} + 1)$.
  \item[Case 2:] The P-position is at $y = u_{t-1} + 2$.  In this case
    $(\ell_t, u_t) = (\ell_{t-1}+1, u_{t-1} + 2)$.
  \item[Case 3:] The P-position is at $y$ where $\ell_{t-1}+1 < y < u_{t-1}+1$.  In
    this case $(\ell_t, u_t) = (\ell_{t-1}+1, u_{t-1} + 1)$.
  \end{description}

  Because of Lemma~\ref{lemma:row-fill-property} these are the only
  three possibilities.  (See Figure~\ref{fig:saturation}.)

  In Cases 1 or 2 the number of unused diagonals does not change,
  i.e. $D_t = D_{t-1}$.  In Case 3 we have $D_t = D_{t-1}-1$.  We will
  show that Case 3 must happen after a finite number of steps.  It
  follows by induction that at some time $t'$ in the future that
  $D_{t'} = 0$.  This will prove the lemma.

  So our remaining task is to prove that Case 3 must happen
  eventually.  So let's assume the contrary and try to imagine a
  scenario where only Cases 1 and 2 occur.

  \begin{description}
  \item[Observation 1:] Case 1 cannot happen twice in a row.  Because
    when it happens row $\ell_t$ is used and $\ell_{t-1}=\ell_t$.  If
    it happened twice in a row that row would be used twice.

  \item[Observation 2:] When Case 2 happens it creates two unused
    neighboring rows in the range $[\ell_t, u_t]$, because in this
    case $u$ increases by two, introducing the two unused rows in that
    range.

  \item[Observation 3:] Any diagonal that is strictly between $\ell_t$
    and $u_t$ will always remain strictly between them as $t$ increases.
  \end{description}

  Let's just say for a second that there is exacly one unused diagonal
  between $\ell_t$ and $u_t$.  And using observation 2, we have two
  unused rows between $\ell_t$ and $u_t$.  We can now compute where
  these meet.  There will a point $(x,y)$ where the lower of the two
  unused rows meets the diagonal.  And another point $(x+1,y+1)$ where
  the upper of the two unused rows meets the diagonal.

  As we run forward computing column after column, and only Case 1 and
  Case 2 occur, eventually we get to a time $t=x$.  The point
  $(t,y)$ is on an unused diagonal between $\ell_t$ and $u_t$.  Also,
  row $y$ is not used.  So the only thing preventing selection of
  $(t,y)$ as the next P-position is the possibility that Case 1
  occurs.  Okay, so say that happens. Now for the next column $t+1$
  Case 1 cannot happen (Observaton 1).  So the search for a P-position
  continues up and before it reaches $(t+1,u_t+2)$ it will find
  $(t+1,y+1)$.  Selecting this P-position fills in the previously
  unused diagonal.

  Now we can remove the assumption that there is just one unused
  diagonal.  Because the only thing that can go wrong with the proof
  above is that the P-position covers some other unused diagonal and
  fills it in.  But this still reduces $D_t$ and makes progress
  towards the goal.

  This completes the proof.
\end{proof}

\noindent \textbf{Definition:} For any altered \wythoff{} game at time $t$
let $S_t$ be the binary string defined by starting with
$[\mbox{\texttt{row}}_t[\ell_t], \ldots, \mbox{\texttt{row}}_t[u_t]]$
and replacing each FALSE by $0$ and each TRUE by $1$.
\vspace{1em}

\noindent \textbf{Definition:} The \textit{Wythoff Update} of a non-empty binary
string $S$ is obtained from $S$ as as follows:
  \begin{quote}
    If the first character of $S$ is $0$ then change that character to
    a $1$ and append $0$ to the right end of $S$.  Alternatively, if the
    first character of $S$ is $1$ then remove it from $S$, and append
    $01$ to the right end of $S$.
  \end{quote}
Note that the update increases the length of the string by one.

% \begin{verbatim}
%      1
%       01
%       110
%        1001
%         00101
%         101010
%          0101001
%          11010010
%           101001001
%            0100100101
% \end{verbatim}

\begin{figure}[hbt!]
  \centering
  \includegraphics[width=1.5in]{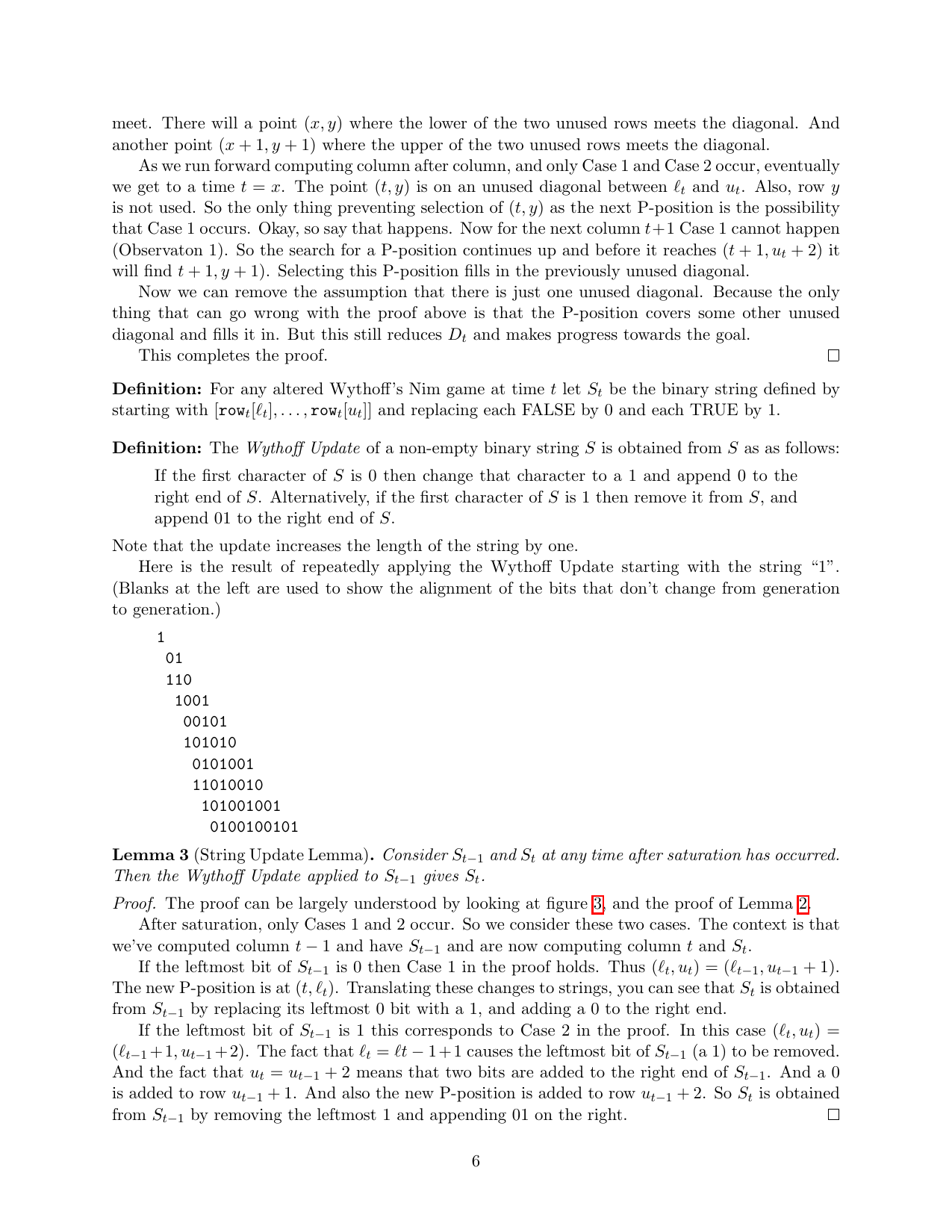}
  \captionsetup{width=.9\linewidth}  
  \caption{Here is the result of repeatedly applying the Wythoff Update starting
    with the string ``1''.  Alignment is used to emphasize the fact
    that most of the bits do not change from one iteration to the next.
  }
  \label{fig:wythoff-update}
\end{figure}

\begin{lemma}[String Update Lemma]
  \label{lemma:string-update}
  Consider $S_{t-1}$ and $S_t$ at any time after saturation has
  occurred.  Then the Wythoff Update applied to $S_{t-1}$ gives $S_t$.
\end{lemma}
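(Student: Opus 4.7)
The plan is to trace a single step of the column computation under the saturation hypothesis and verify directly that the change $S_{t-1} \to S_t$ is exactly the Wythoff Update. First I would exploit saturation at $t-1$: the array $\texttt{diag}_{t-1}$ is TRUE precisely on $[\ell_{t-1}, u_{t-1}]$, so, after the canonical upward shift that re-interprets it as diagonals incoming to column $t$, the heights whose incoming diagonal is blocked are exactly $[\ell_{t-1}+1,\, u_{t-1}+1]$. Combining this with Lemma~\ref{lemma:row-fill-property} at time $t-1$ (rows below $\ell_{t-1}$ are used, rows above $u_{t-1}$ are free) lets me enumerate, row by row, which candidate heights in column $t$ are admissible.

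Next I would run the upward search: rows $0, \ldots, \ell_{t-1}-1$ are blocked by row; rows $\ell_{t-1}+1, \ldots, u_{t-1}+1$ are blocked by diagonal; and the only two heights that can possibly be admissible are $\ell_{t-1}$ and $u_{t-1}+2$. Whether $\ell_{t-1}$ is admissible is decided by $\texttt{row}_{t-1}[\ell_{t-1}]$, which is precisely the first character of $S_{t-1}$. This gives two cases: if $S_{t-1}$ begins with $0$ then the P-position lands at $(t,\ell_{t-1})$ and $(\ell_t, u_t)=(\ell_{t-1},\, u_{t-1}+1)$; if $S_{t-1}$ begins with $1$ then the P-position lands at $(t, u_{t-1}+2)$ and $(\ell_t, u_t)=(\ell_{t-1}+1,\, u_{t-1}+2)$.

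In the first case, the only row that changes is $\ell_{t-1}$ (from FALSE to TRUE); the interior entries of $S_{t-1}$ carry over unchanged, and the new rightmost entry $\texttt{row}_t[u_{t-1}+1]$ equals $\texttt{row}_{t-1}[u_{t-1}+1]$, which is FALSE by Lemma~\ref{lemma:row-fill-property}. So $S_t$ is obtained from $S_{t-1}$ by flipping the leading $0$ to $1$ and appending a $0$ --- exactly the Wythoff Update. In the second case, the leading $1$ of $S_{t-1}$ now falls outside $[\ell_t, u_t]$, the interior entries carry over unchanged, and the two new rightmost entries are FALSE (at height $u_{t-1}+1$, by Lemma~\ref{lemma:row-fill-property}) and TRUE (at height $u_{t-1}+2$, the new P-position). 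So $S_t$ is obtained by deleting the leading $1$ and appending $01$ --- again exactly the Wythoff Update.

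The main obstacle is not conceptual but notational: keeping the off-by-one between $\texttt{diag}_{t-1}$ and the diagonals incoming to column $t$ aligned with the re-indexing when the window $[\ell_{t-1}, u_{t-1}]$ shifts to $[\ell_t, u_t]$, consistently in both cases. I would handle this by tabulating the boundary entries (the old left endpoint, the new right endpoint, and in Case~B the two new right endpoints) explicitly, invoking Lemma~\ref{lemma:row-fill-property} once at each boundary and letting the middle of the string pass through verbatim.
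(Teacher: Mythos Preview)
Your proposal is correct and follows essentially the same approach as the paper: use saturation to reduce to two cases keyed on the first bit of $S_{t-1}$, identify the resulting P-position and window $(\ell_t,u_t)$ in each case, and read off the string change. The paper's version is terser because it simply invokes the case analysis already carried out in the proof of Lemma~\ref{lemma:saturation}, whereas you re-derive that case split directly from the saturation hypothesis and Lemma~\ref{lemma:row-fill-property}; your treatment of the boundary entries (in particular the explicit check that the new rightmost bits are $0$ and $01$ via Lemma~\ref{lemma:row-fill-property}) is actually more careful than what the paper writes out.
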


\begin{proof}
The proof can be largely understood by looking at
Figure~\ref{fig:saturation}, and the proof of
Lemma~\ref{lemma:saturation}.

After saturation, only Cases 1 and 2 occur.  So we consider these two
cases.  The context is that we've computed column $t-1$ and have
$S_{t-1}$ and are now computing column $t$ and $S_t$.

If the leftmost bit of $S_{t-1}$ is $0$ then Case 1 in the proof
holds.  Thus $(\ell_t,u_t) = (\ell_{t-1}, u_{t-1}+1)$.  The new
P-position is at $(t,\ell_t)$.  Translating these changes to strings,
you can see that $S_t$ is obtained from $S_{t-1}$ by replacing its
leftmost 0 bit with a 1, and adding a 0 to the right end.
(This occurs when computing column $t=14$ in Figure~\ref{fig:saturation}.)

If the leftmost bit of $S_{t-1}$ is $1$ this corresponds to Case 2 in
the proof.  In this case $(\ell_t,u_t) = (\ell_{t-1}+1, u_{t-1}+2)$.
The fact that $\ell_t=\ell_{t-1}+1$ causes the leftmost bit of
$S_{t-1}$ (a 1) to be removed.  And the fact that $u_t = u_{t-1}+2$
means that two bits are added to the right end of $S_{t-1}$.  And a 0
is added to row $u_{t-1}+1$.  And also the new P-position is added to
row $u_{t-1}+2$.  So $S_t$ is obtained from $S_{t-1}$ by removing the
leftmost 1 and appending 01 on the right.  (This occurs when computing
column $t=15$ in Figure~\ref{fig:saturation}.)
\end{proof}

%  \medskip
%  
%  Todo:
%  
%  \begin{itemize}
%  \item Prove the diagonal fill-in theorem.
%  \item Prove the cyclic shift-register characterization of the
%    filled rows between L and U.
%  \item Prove a sequence of asymtotic results of running two shift-registers (that
%    are the same length, but initialized differently).
%    \begin{itemize}
%      \item It eventually reaches a string of 1s and 0s with no two 1s in a
%        row and no three 0s in a row.  (this is not important)
%      \item Define the concept of a defect.
%      \item Prove that any two initial sequences must eventually reach a
%        state with a specific number of defects that are preserved.
%      \item Prove that the defects spread apart at an exponential rate.
%    \end{itemize}
%  \end{itemize}

\section{Properties of the Wythoff String Update Rule}

\noindent \textbf{Definition:} Let $S$ and $T$ be bitstrings. We say that $S$ and $T$ are \textit{balanced} if they have the same length and the same number of ones.

% Important Lemma 1
\begin{lemma} 
Starting with two arbitrary bitstrings $S$ and $T$ of the same length, Wythoff updates are applied to both of them producing two sequences of strings $S=S_0, S_1, S_2, \ldots$ and $T=T_0, T_1, T_2, \ldots$. Then there exists a time $t$ when $S_t$ and $T_t$ become balanced.
\end{lemma}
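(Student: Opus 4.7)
The plan is to track the integer $D_t := |S_t|_1 - |T_t|_1$. A Wythoff update either converts a leading~$0$ to~$1$ (incrementing the ones-count by~$1$) or pops a leading~$1$ and appends ``01'' (leaving the ones-count unchanged); so $D_{t+1}-D_t \in \{-1,0,+1\}$, with $-1$ iff $S_t$ has a leading~$1$ and $T_t$ a leading~$0$, and $+1$ in the opposite case. Because $D$ moves in unit steps and starts at an integer, balance is equivalent to $D$ visiting~$0$. WLOG $D_0 \geq 0$, and I aim to show $D_t$ cannot remain strictly positive forever.

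The first ingredient is a structural observation: since a leading~$0$ is always converted to a leading~$1$ at the very next step, the front sequence $f_0,f_1,\ldots$ of any string contains no ``$00$''. This sequence parses uniquely into blocks of type ``$1$'' and ``$01$'', each corresponding to one character being dequeued from the string's FIFO queue --- an original or appended~$1$ is popped in one step, while an original or appended~$0$ is first converted to~$1$ and then popped, taking two steps. Tracking the appended material, the sequence of dequeued characters across all time is exactly the infinite word $\Sigma(S_0) := S_0 \cdot W(S_0) \cdot W^2(S_0) \cdots$, where $W$ is the substitution $W(0) = 001$, $W(1) = 01$; the number of leading-$0$ events by time~$t$ equals the number of zeros in a specific prefix of $\Sigma(S_0)$.

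The critical arithmetic fact is that $|W(Y)|_1 = |Y|$ for every bitstring~$Y$, since each of $W(0)$ and $W(1)$ contains exactly one~$1$. Consequently, at the ``level-$k$ completion time'' $t_S^{(k)}$ when $S$'s queue has just finished consuming $W^{k-1}(S_0)$, we have $|S_{t_S^{(k)}}|_1 = |W^{k-1}(S_0)|$, a value depending only on the iterated-length statistics of $S_0$. Analogous statements hold for $T$ at its own completion times $t_T^{(k)}$.

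The main obstacle is converting these substitution-level snapshots into the existence of a~$t$ with $D_t = 0$. My plan is to compare $D$ at the completion times $t_S^{(k)}$ and $t_T^{(k)}$ across successive levels~$k$. Under the contradiction hypothesis $D_t > 0$, the string $T_t$ has strictly more zeros than $S_t$ at every step, so $T$'s front-$0$ events should accumulate fast enough that $T$'s ones-count catches up to $S$'s. The trickiest step will be turning the qualitative ``$T$ has more zeros'' into a quantitative statement strong enough to conflict with the unit-step evolution of~$D$; once this lag is established, an inductive argument across the substitution levels will pin down a finite~$t$ at which $D_t = 0$, yielding the desired balance.
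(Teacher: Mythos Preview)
Your setup is sound and overlaps substantially with the paper's approach: both track the ones-count difference, both note that it moves in unit steps, and both implicitly use the substitution $W$ (the paper calls it $w$) and the associated ``completion time'' $2\cdot 0_X + 1_X$ after which a string $X$ has been fully consumed and replaced by $W(X)$. Your observation that $|W(Y)|_1 = |Y|$ is exactly the fact the paper exploits.

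The gap is that you stop precisely where the proof actually lives. You write that ``the trickiest step will be turning the qualitative `$T$ has more zeros' into a quantitative statement,'' and then defer to an unspecified inductive argument across levels. But ``$T_t$ has more zeros at every step'' does \emph{not} by itself imply that $T$ sees more front-zeros over any given window; the zeros could be sitting in the tail. Something concrete is needed to force the front of the zero-richer string to encounter at least one extra zero within a bounded time.

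The paper supplies this missing piece directly, without any contradiction hypothesis or level-by-level induction. With $1_T > 1_S$ (your $D_0 < 0$ after swapping names), it observes that $S$ must contain a prefix with exactly $0_T + 1$ zeros and fewer than $1_T$ ones, simply because $0_S > 0_T$ and $1_S < 1_T$. Processing that prefix of $S$ takes at most $2\cdot 0_T + 1_T$ steps and increments $1_S$ by $0_T + 1$, while in the same number of steps $T$ completes exactly one full cycle and gains only $0_T$ ones. Hence $\delta = 1_T - 1_S$ strictly drops after $2\cdot 0_T + 1_T$ updates; induction on $\delta$ together with the unit-step property then forces $\delta$ to hit zero. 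Your framework would close the same way once you replace the vague ``accumulation'' claim with this prefix-counting argument at a single level; the multi-level machinery you outline is not needed.
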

\begin{proof}
Since the Wythoff update increases the length of a string by one, it follows that $|S_i| = |T_i|$ for all $i$.  For convenience, let $0_T$ and $1_T$ denote the number of zeros and ones in $T$ before applying any updates. Define $0_S$ and $1_S$ similarly.

If $0_S = 0_T$, the lemma is trivially true. Otherwise, without loss of generality, assume that $1_T > 1_S$. Since both strings have the same length, this implies that $0_T < 0_S$. Let $\delta = 1_T - 1_S$. We can induct on the value of $\delta$. Assume that for all $0 \leq \delta' < \delta$, the lemma is true.

By applying the Wythoff update to a string, the number of ones in the string increases by $1$ if the first bit is a $0$, and stays the same otherwise. So after applying $1_T + 2\cdot 0_T$ Wythoff updates, the number of ones in $T$ will increase by exactly $0_T$. Furthermore, since $0_S > 0_T$ and $1_S < 1_T$, there must exist some prefix $P$ of $S$ with exactly $0_T+1$ zeros and strictly less than $1_T$ ones. Let the number of ones in $P$ be $y$.  (Note that the last character of $P$ must be a 0.)

Consider the situation after $2\cdot 0_T + y + 1$ updates of $S$.  (This is one update for each 1 in $P$, and two updates for each 0 in $P$, except the last one, for which only one update is done.)  The result of these updates is that the number of ones in $S$ will have increased by exactly $0_T+1$. And $2\cdot 0_T + y + 1 \leq 2\cdot 0_T + 1_T$. Since the number of ones never decreases after a Wythoff update, the number of ones in $S$ after applying $2\cdot 0_T + 1_T$ updates is at least $0_T + 1$, and the value of $\delta$ must strictly decrease after a finite number of updates.

If $\delta$ remains positive after these updates, then we can use the induction hypothesis. Otherwise, $\delta$ becomes negative. The number of ones in a string increases by at most $1$ with each Wythoff update. So $\delta$ can change by an absolute value of at most $1$ with each Wythoff update, and there must have been some point in time when $S$ and $T$ had the same length and the same number of ones.
\end{proof}

\noindent \textbf{Definition:} Let $w$ be a function on bitstrings, which simultaneously replaces each \texttt{0} with \texttt{001}, and each \texttt{1} with \texttt{01}. For example, $w(\texttt{0010}) = \texttt{00100101001}$. For repeated application of $w$, we write $w^n$ --- for example, $w^5(S) = w(w(w(w(w(S)))))$. Note that applying $w$ is equivalent to applying $2\cdot 0_S + 1_S$ Wythoff updates to a bitstring $S$.

\vspace{1em}

There are several useful facts about $w$ when it is applied to a bitstring $S$, which will be useful later. All of them, listed below, can be easily proved inductively. % maybe include proofs of these in an appendix or something?
\begin{itemize}
\item If $S$ is nonempty, the last character of $w(S)$ is a \texttt{1}, and the first character is a \texttt{0}.
\item No two consecutive characters of $w(S)$ are both ones.
\item No three consecutive characters of $w(S)$ are all zeros.
\item There are $2\cdot 0_S + 1_S$ zeros and $0_S + 1_S$ ones in $w(S)$.
\item The length of $S$ is equal to the number of ones in $w(S)$, or algebraically: $1_{w(S)} = |S| = 0_S + 1_S$.
\item From the previous two facts, the number of zeros in $S$ satisfies $0_S = 0_{w(S)} - 1_{w(S)}$.
\item It follows that $1_S = 2\cdot 1_{w(S)} - 0_{w(S)}$.
\item For any prefix $S'$ of $w(S)$ not ending in a zero, there exists a prefix $\sigma$ of $S$ satisfying $w(\sigma) = S'$. In particular, if $S'$ is the empty prefix, then $\sigma$ is also the empty prefix.
\end{itemize}

At this point, it will be helpful to introduce visuals for intuition. We represent a bitstring as a curve on a 2-D grid. A \texttt{0} corresponds to moving to the right, and a \texttt{1} corresponds to moving up. For example, the curve corresponding to \texttt{111001} would be drawn:

\begin{center}
    \includegraphics[scale=0.3]{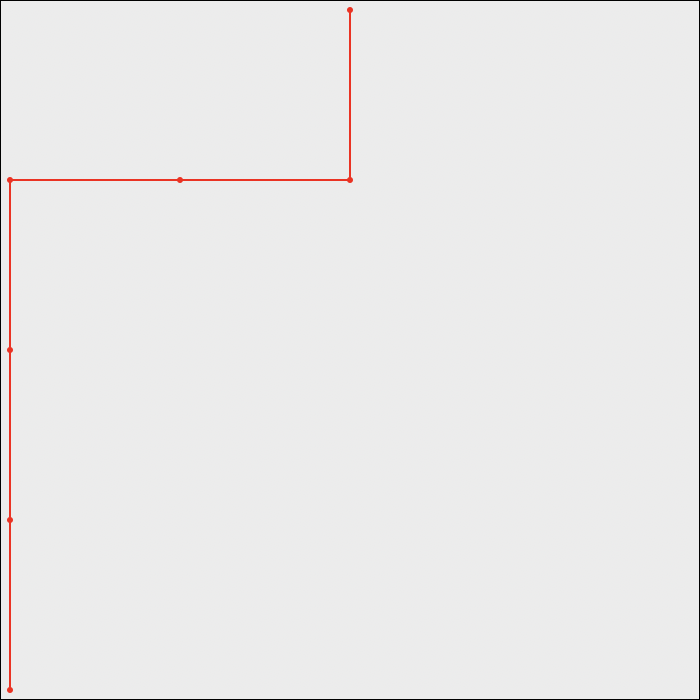}
\end{center}

By experimentation with small examples, we can observe that the area between two balanced strings appears to stay invariant under applications of $w$. For example, the evolution of \texttt{00100111} (in blue) and \texttt{11001100} (in red) under two applications of $w$ are pictured below. The area between the curves remains equal to $10$ throughout.

\begin{center}
    \includegraphics[scale=0.3]{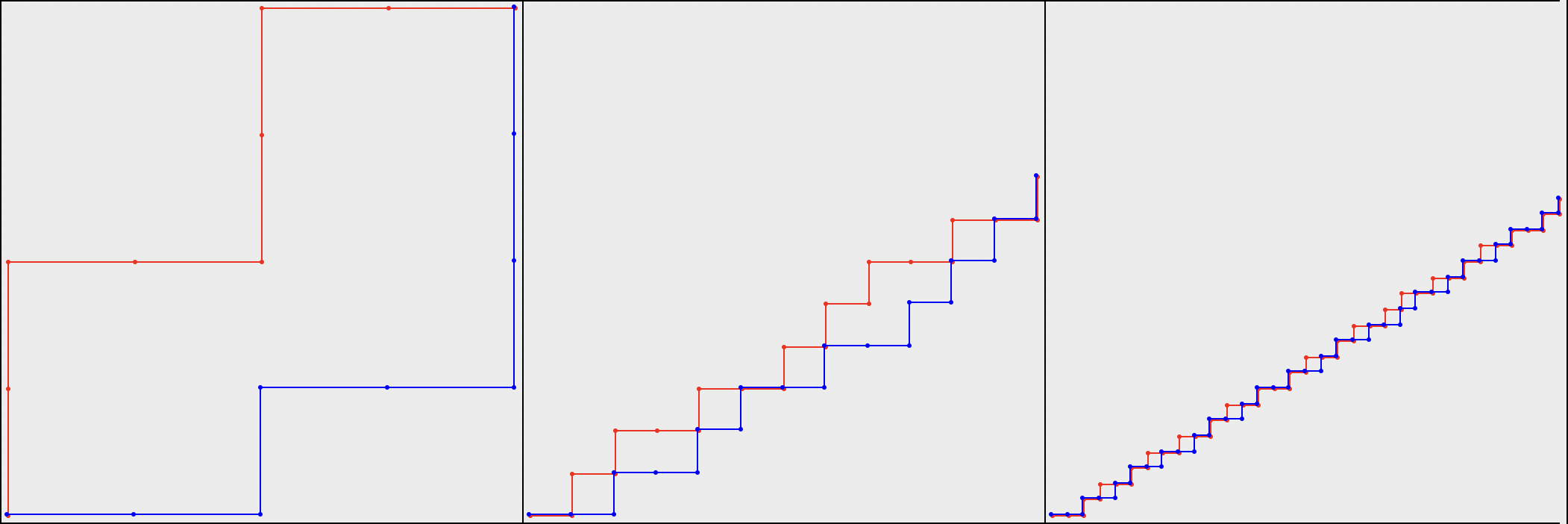}
\end{center}

This motivates the following definitions, which identify each unit of area with a coordinate:

\vspace{1em}

\noindent \textbf{Definition:} A unit of area is described by $(x, y)$, an ordered pair of integers. $(x,y)$ is said to be \textit{below} a string $S$ if there exists a prefix of $S$ with at most $x$ zeros and at least $y+1$ ones. Similarly, $(x,y)$ is \textit{above} $S$ if there exists a prefix of $S$ with at most $y$ ones and at least $x+1$ zeros.

\noindent \textbf{Definition:} A unit of area $(x, y)$ is \textit{between} two balanced strings $S$ and $T$, if it is above one of the strings and below the other. 

\noindent \textbf{Definition:} For balanced bitstrings, the \textit{area between $S$ and $T$} is the number of units of area $(x,y)$ between $S$ and $T$.

\vspace{1em}

The area between $S$ and $T$ must be finite, since for a unit of area to be below one string and above the other, it must satisfy $0 \leq x \leq 0_S$ and $0 \leq y \leq 1_S$. Also, a unit of area cannot simultaneously above and below a string $S$ ~--- since if $(x,y)$ is below $S$, then the shortest prefix with $x+1$ zeros must have at least $y+1$ ones, and so it cannot also be above $S$. 

\vspace{1em}

The following two lemmas provide insight into how units of area are transformed by applications of $w$:

\begin{lemma}
\label{forwardmap}
Let $S$ and $T$ be balanced bitstrings. If $(x,y)$ is between $S$ and $T$, then $(2x+y+1,x+y)$ is between $w(S)$ and $w(T)$.
\end{lemma}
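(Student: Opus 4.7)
The plan is to prove the lemma by constructing explicit witnessing prefixes. By the symmetry of ``between,'' I may assume without loss of generality that $(x,y)$ is below $S$ and above $T$, so the goal is to show that $(2x+y+1, x+y)$ is below $w(S)$ and above $w(T)$. The key bookkeeping fact is that if $\sigma$ is a prefix of a bitstring with $a$ zeros and $b$ ones, then $w(\sigma)$ is a prefix of the image string with $2a + b$ zeros and $a+b$ ones. So it suffices to produce a prefix of $w(S)$ with at most $2x+y+1$ zeros and at least $x+y+1$ ones, and a prefix of $w(T)$ with at most $x+y$ ones and at least $2x+y+2$ zeros.

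For the ``below $w(S)$'' half, the hypothesis that $(x,y)$ is below $S$ says that the shortest prefix of $S$ containing $y+1$ ones has length at most $x+y+1$. I would take $\sigma$ to be the length-$(x+y+1)$ prefix of $S$, which exists since $|S| = 0_S + 1_S \geq (x+1)+(y+1)$. Because $\sigma$ contains the shorter prefix with $y+1$ ones, $\sigma$ itself has at least $y+1$ ones and hence at most $x$ zeros. Then $w(\sigma)$ is a prefix of $w(S)$ with at most $2x+y+1$ zeros and exactly $x+y+1$ ones, completing this half.

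The ``above $w(T)$'' half is more delicate, because prefixes of the form $w(\tau)$ always have exactly $|\tau|$ ones and so cannot match the target of ``$\leq x+y$ ones and $\geq 2x+y+2$ zeros'' directly. Instead, I would take the longest prefix of $w(T)$ containing exactly $x+y$ ones. Since the $1$s of $w(T)$ are the last characters of the blocks $w(c_1), w(c_2), \ldots$ coming from characters of $T$, this longest prefix has the form $w(\tau) \cdot z^e$, where $\tau$ is the length-$(x+y)$ prefix of $T$, $c$ is the $(x+y+1)$-st character of $T$, and $e \in \{1,2\}$ is the number of zeros preceding the $1$ in the block $w(c)$. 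Its zero count is $2 a_\tau + b_\tau + e = a_\tau + (x+y) + e$, where $a_\tau$ is the zero count of $\tau$. The hypothesis that $(x,y)$ is above $T$ says the shortest prefix of $T$ with $x+1$ zeros has length at most $x+y+1$, so the length-$(x+y+1)$ prefix of $T$ contains at least $x+1$ zeros. A case split on $c$ now gives $a_\tau \geq x$ when $c = 0$ (with $e = 2$) and $a_\tau \geq x+1$ when $c = 1$ (with $e = 1$), and either way the zero count is at least $2x+y+2$.

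The main obstacle I expect is the off-by-one bookkeeping in this ``above'' half: one has to truncate in the middle of a $w$-block rather than at a block boundary and track how the next character of $T$ determines the extra zero contribution $e$. Once the length-$(x+y+1)$ prefix of $T$ is identified as the object from which to read off the bound, the required inequality $a_\tau + e \geq x+2$ drops out cleanly in both sub-cases.
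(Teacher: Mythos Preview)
Your argument is correct. Both halves go through: in the ``below $w(S)$'' half, taking $\sigma$ to be the length-$(x+y+1)$ prefix of $S$ (which exists because balancedness transfers $0_T\ge x+1$ from ``above $T$'' to $0_S\ge x+1$, and ``below $S$'' gives $1_S\ge y+1$) makes $w(\sigma)$ a prefix of $w(S)$ with exactly $x+y+1$ ones and at most $2x+y+1$ zeros, no further adjustment needed. In the ``above $w(T)$'' half, your description of the longest prefix of $w(T)$ with $x+y$ ones as $w(\tau)\cdot 0^e$ is accurate because the ones of $w(T)$ sit exactly at the ends of the blocks $w(c_i)$, and the case split on $c$ correctly yields $a_\tau+e\ge x+2$ from the fact that the length-$(x+y+1)$ prefix of $T$ has at least $x+1$ zeros.

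The paper's proof takes a different route. For each direction it starts from an \emph{arbitrary} witnessing prefix ($T'$ with $x-l$ zeros and $y+k$ ones, or $S'$ with $y-l$ ones and $x+k$ zeros), applies $w$, and then \emph{trims a suffix} from the image to hit the target counts, using the structural facts that $w$-images have no two consecutive ones and no three consecutive zeros to bound how much is lost in trimming. Your approach instead picks the witness carefully in advance: for ``below'' you choose the prefix of exactly the right length so that $w(\sigma)$ lands on target with no trimming, and for ``above'' you work directly in $w(T)$, reading off the block structure and doing a two-case split on the next character of $T$. Your ``below'' argument is noticeably cleaner than the paper's; your ``above'' argument replaces the paper's suffix-trimming estimate with an explicit case analysis, which is equally short and arguably more transparent.
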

\begin{proof}
Without loss of generality, suppose that $(x,y)$ is above $S$ and below $T$. First, we show that $(2x+y+1,x+y)$ is below $w(T)$:

\begin{itemize}
\item Let $T'$ be a prefix of $T$ with $x-l$ zeros and $y+k$ ones, for some $l \in \mathbb{N}$ and $k \in \mathbb{N}^+$. Such a prefix must exist by the definition of being below $T$.

$w(T')$ will have $2x-2l+y+k$ zeros and $x-l+y+k$ ones. Consider the shortest suffix of $w(T')$ with $k-l-1$ zeros. Since there are no two consecutive ones, and the since suffix must have a zero at its end, this suffix can have at most $k-l-1$ ones. So, removing this suffix from $w(T')$ results in a string with $2x+y+1-l$ zeros and at least $x+y+1$ ones. So $(2x+y+1, x+y)$ is below $w(T)$.
\end{itemize}

And a proof that $(2x+y+1,x+y)$ is above $w(S)$:

\begin{itemize}
\item The proof is similar to the one above. Let $S'$ be a prefix of $S$ with $y-l$ ones and $x+k$ zeros, for some $l \in \mathbb{N}$ and $k \in \mathbb{N}^+$. 

$w(S')$ will have $2x+y+2k-l$ zeros and $x+y+k-l$ ones. Consider the shortest suffix of $w(S')$ with $k-l$ ones. Since there are no three consecutive zeros, and since the suffix must have ones at both ends, the suffix can have at most $2(k-l-1)$ zeros. So, removing this suffix from $w(S')$ results in a string with at least $2x+y+l+2$ zeros and $x+y$ ones. So $(2x+y+1, x+y)$ is above $w(S)$.
\end{itemize}

So $(2x+y+1,x+y)$ is between $w(S)$ and $w(T)$.

\end{proof}

\begin{lemma}
\label{backwardmap}
Let $S$ and $T$ be balanced bitstrings. If $(x,y)$ is between $w(S)$ and $w(T)$, then $(x-y-1,2y-x+1)$ is between $S$ and $T$.
\end{lemma}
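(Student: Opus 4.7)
The plan is to invert the construction from Lemma~\ref{forwardmap}, using the eighth bullet above as a ``preimage operator'' for $w$ on prefixes that end in \texttt{1}. Note first that the maps $(x,y)\mapsto(2x+y+1,x+y)$ and $(x,y)\mapsto(x-y-1,2y-x+1)$ are inverses of one another, so this lemma is the precise converse of the previous one. Assuming without loss of generality that $(x,y)$ is below $w(T)$ and above $w(S)$, I need to exhibit certifying prefixes of $T$ and $S$ witnessing that $(x-y-1,2y-x+1)$ is below $T$ and above $S$.

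For the ``below $T$'' direction I would take the shortest prefix $P$ of $w(T)$ containing exactly $y+1$ ones; this exists because $(x,y)$ is below $w(T)$, it necessarily ends in a \texttt{1}, and it has at most $x$ zeros. The eighth bullet then gives a prefix $\tau$ of $T$ with $w(\tau)=P$, and the bullet-point identities $|\tau|=1_P$ and $0_\tau=0_P-1_P$ translate the inequalities on $P$ directly into $0_\tau\le x-y-1$ and $1_\tau\ge 2y-x+2$, which is exactly what is required.

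For the ``above $S$'' direction the natural short prefix of $w(S)$ ends in a \texttt{0}, so the preimage bullet does not apply directly. Instead I would take the shortest prefix $Q$ of $w(S)$ with $x+1$ zeros (which automatically has at most $y$ ones) and then extend $Q$ forward until the next \texttt{1} appears. The ``no three consecutive zeros'' property forces this extension to append either \texttt{1} or \texttt{01}, so the resulting prefix $P$ ends in a \texttt{1}, has $x+1$ or $x+2$ zeros, and has one more \texttt{1} than $Q$. The eighth bullet then provides $\sigma$ with $w(\sigma)=P$, and the count formulas again deliver the needed bounds $0_\sigma\ge x-y$ and $1_\sigma\le 2y-x+1$ in both subcases.

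The main obstacle is the asymmetry between the two halves: the ``below'' argument shortens a prefix, so the zero count can only decrease, whereas the ``above'' argument must lengthen it, and one has to argue that the (at most one) extra zero introduced by the extension does not spoil the upper bound on $1_\sigma$. Both subcases of the extension (\texttt{1} vs.\ \texttt{01}) must be checked, but in each one the slack provided by $a\le y$ (where $a$ is the number of ones in $Q$) is enough to absorb the extra zero and preserve the target inequalities.
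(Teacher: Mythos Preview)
Your proposal is correct and follows essentially the same approach as the paper: in both cases one chooses a witnessing prefix of $w(S)$ (resp.\ $w(T)$), adjusts it so that it ends in a \texttt{1}, applies the preimage operator from the eighth bullet, and then reads off the required inequalities via $0_\sigma=0_P-1_P$ and $1_\sigma=2\cdot 1_P-0_P$. The only cosmetic difference is that you select specific extremal prefixes (shortest with $y{+}1$ ones, shortest with $x{+}1$ zeros) and do an explicit two-case analysis on whether the extension is \texttt{1} or \texttt{01}, whereas the paper keeps generic parameters $l,k$ for the witness and absorbs the extension into a single pair of inequalities $1_{S'}\le y-l+1$, $0_{S'}\ge x+k$; the arithmetic is identical either way.
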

\begin{proof}
Without loss of generality, suppose that $(x,y)$ is above $w(S)$ and below $w(T)$.

Since it is above $w(S)$, there exists a prefix $S'$ of $w(S)$ with $y-l$ ones and $x+k$ zeros, for some $l\in\mathbb{N}$, and $k \in \mathbb{N}^+$. If the last character of $S'$ is not a one, extend its length until its last character is a one. This is possible because the last character of $w(S)$ must be a one. By performing this extension, we now have the inequalities $1_{S'} \leq y-l+1$ and $0_{S'} \geq x+k$. Now consider $w^{-1}(S')$ ~--- the prefix $\sigma$ of $S$ satisfying $w(\sigma) = S'$. 

The number of ones in $\sigma$ is 

\begin{align*}
1_\sigma &= 2\cdot 1_{S'} - 0_{S'} \\
&\leq 2\cdot(y-l+1) - (x+k) \\
&= 2y-x+1-(2l+k-1) \\
&\leq 2y-x+1
\end{align*}

And the number of zeros in $\sigma$ is 
\begin{align*}
0_\sigma &= 0_{S'}-1_{S'} \\
&\geq (x+k)-(y-l+1) \\
&= x-y-1+(l+k) \\
&\geq (x-y-1)+1
\end{align*}

So $\sigma$ is the prefix we are looking for to prove that $(x-y-1,2y-x+1)$ is above $S$. 

The proof that $(x-y-1,2y-x+1)$ is below $T$ proceeds similarly. Let $T'$ be a prefix of $w(T)$ with $x-l$ zeros and $y+k$ ones, for some $l\in\mathbb{N}$, and $k \in \mathbb{N}^+$. Reduce the length of $T'$ while it ends in a zero. Note that after this modification, the number of ones does not change. Since the number of ones is strictly positive, and $w(T)$ cannot start with a \texttt{1}, the number of zeros must also be positive. And we have $0_{T'} \leq x-l$ and $1_{T'} = y+k$. Let $\tau = w^{-1}(T')$. We can then derive:

\begin{align*}
1_\tau &= 2\cdot 1_{T'} - 0_{T'} \\
&\geq 2\cdot(y+k) - (x-l) \\
&= 2y-x+(2k+l) \\
&\geq (2y-x+1)+1
\end{align*}

And,

\begin{align*}
0_\tau &= 0_{T'}-1_{T'} \\
&\leq (x-l)-(y+k) \\
&= x-y-(y-k) \\
&\leq x-y-1
\end{align*}

So $(x-y-1,2y-x+1)$ is below $T$, and thus between $S$ and $T$.

\end{proof}

It's easy to verify that $f(x,y) = (2x+y+1,x+y)$ and $g(x,y) = (x-y-1,2y-x+1)$ are inverses of each other. It follows that $(x,y)$ is between $S$ and $T$ if and only if $(2x+y+1,x+y)$ is between $w(S)$ and $w(T)$.

\vspace{1em}

On closer inspection of the pictures, we can see that the area not only remains constant, but that the units of area get more and more separated with each application of $w$. 

\begin{center}
    \includegraphics[scale=0.3]{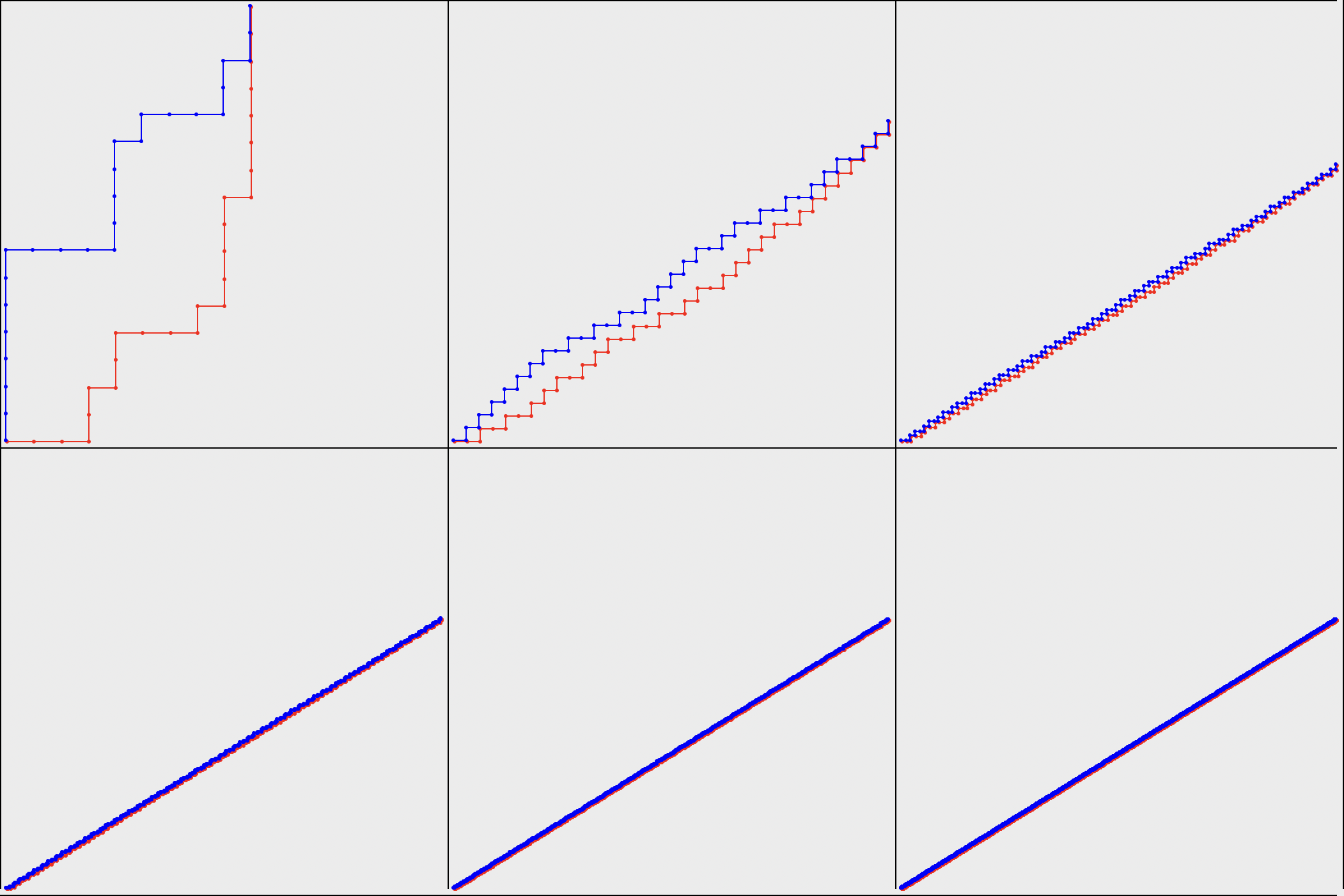}

    \small{A larger example, with more steps of evolution. The curves approach two lines,
    
    differing only by single disjoint units of area.}
\end{center}

We make this intuition precise with the notion of a \textit{defect}:

\vspace{1em}

\noindent \textbf{Definition:} For balanced bitstrings $S$ and $T$, a \textit{defect} is an index $1 \leq i \leq |S| - 1$ such that:
\begin{itemize}
\item The prefixes of $S$ and $T$ with length $i-1$ are balanced.
\item $S[i] \neq T[i]$
\item $S[i] \neq S[i + 1]$
\item $S[i + 1] \neq T[i + 1]$
\end{itemize}

\noindent \textbf{Definition:} Two bitstrings $S$ and $T$ \textit{differ by defects} if they are balanced, and for every index $1 \leq i \leq |S|$ where $S[i] \neq T[i]$, there is a defect either at index $i-1$ or $i$. In other words, every difference between $S$ and $T$ is a part of a defect.

\vspace{1em}

Visually, a defect is a single isolated unit of area between the two strings. And the strings differ by defects if no two units of area touch each other. We use this visual intuition to motivate the following lemmas:

\begin{lemma}
\label{diffdefect}
Let $S$ and $T$ be balanced. Then the following are equivalent: 
\begin{itemize}
\item For all units of area $(x,y)$ and $(x',y')$ between $S$ and $T$, $(x,y) \neq (x',y') \Rightarrow x\neq x'$ and $y\neq y'$.
\item $S$ and $T$ differ by defects.
\end{itemize} 

And, if they differ by defects, the number of defects is equal to the area between $S$ and $T$.

\end{lemma}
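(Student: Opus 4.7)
The plan is to decompose $S$ and $T$ into \emph{bubbles}: the maximal index intervals on which no intermediate prefix is balanced. Between two consecutive balanced prefixes, the two lattice paths share only their endpoints, so each bubble's enclosed cells form a simply connected, edge-connected polyomino bounded by two monotone paths, and the total area between $S$ and $T$ is the disjoint sum of the bubble areas. I will then argue the equivalence by showing that the row/column ``isolated'' condition on units of area forces every bubble to contain exactly one cell, and that a one-cell bubble is precisely a defect on the string side.

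First I would prove the easy direction: if $S$ and $T$ differ by defects, a defect at index $i$ is a two-character bubble in which $S[i\mathinner{\ldotp\ldotp}i+1]$ and $T[i\mathinner{\ldotp\ldotp}i+1]$ are $\texttt{01}$ and $\texttt{10}$ in some order, so the paths split at the lattice point $P$ reached by the common prefix of length $i-1$ and rejoin at $P + (1,1)$, enclosing exactly one unit square at coordinate $P$. Between consecutive defects the strings agree, so the paths move together, and the rejoining point of one defect is weakly below and weakly to the left of the splitting point of the next; hence the next enclosed unit square has both coordinates strictly greater than $P$. So distinct defects yield unit squares in distinct rows and distinct columns, and the area between $S$ and $T$ equals the number of defects.

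For the converse I would proceed by contrapositive: if some bubble has area at least $2$, then because the bubble polyomino is edge-connected (its boundary is a simple closed curve formed by two monotone paths meeting only at the ends), it contains an edge-adjacent pair of cells, which by definition shares a row or a column and violates the isolated condition. Hence ``isolated'' forces every bubble to have area exactly one, and an area-one bubble necessarily has a $1 \times 1$ bounding box, so it spans exactly two string indices on which the paths execute $\texttt{RU}$ and $\texttt{UR}$, i.e., $S$ and $T$ show the pattern $\texttt{01}$ vs $\texttt{10}$ at those indices. This is exactly the four-clause defect definition, so every disagreement of $S$ and $T$ is part of a defect, meaning they differ by defects.

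The main obstacle is the bubble decomposition itself: carefully verifying that a bubble's enclosed cells really do form an edge-connected polyomino (so that area $\geq 2$ forces a shared row or column), and that an area-one bubble cleanly unpacks into all four clauses of the defect definition rather than just the first two. Once this structural equivalence between ``area-one bubble'' and ``defect'' is pinned down, and strict coordinate growth between consecutive defects is recorded, the equivalence and the count fall out of the bijection between defects and unit squares of area.
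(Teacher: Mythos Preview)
Your argument is correct, but it takes a genuinely different route from the paper's.

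The paper never introduces bubbles or polyominoes; it works purely with indices. For the direction ``isolated $\Rightarrow$ differ by defects'' it lists the positions $v_1 < v_2 < \cdots < v_\ell$ where $S$ and $T$ disagree and shows, by a two-character case analysis, that the isolated condition forces $v_2 = v_1 + 1$ and $S[v_1] \neq S[v_1+1]$: for instance, if $S[v_1+1] = T[v_1+1] = \texttt{0}$ then $(z,o)$ and $(z{+}1,o)$ are both between the strings, contradicting isolation. Iterating over the pairs $(v_{2i-1},v_{2i})$ finishes. For the reverse direction and the count, the paper sets up an explicit bijection: a unit of area sits at $(x,y)$ iff there is a defect at index $x+y+1$, and consecutive defects have strictly larger prefix zero- and one-counts. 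Your approach instead packages the geometry: each bubble encloses a simply connected polyomino, edge-connectedness forces any bubble of area $\ge 2$ to contain two cells sharing a row or column, and an area-one bubble unpacks into the four defect clauses. The obstacle you flag is real but surmountable: within a bubble the signed difference in one-counts keeps a fixed sign, so one path stays strictly above the other and the enclosed region is column-convex (hence edge-connected); and an area-one bubble must have its unique cell touching both the split corner and the rejoin corner, forcing those corners to differ by $(1,1)$ and the bubble to span exactly two indices. The paper's route is slightly more self-contained since it never leaves the language of prefixes, while yours makes the lattice-path picture do the work and would adapt more readily if the notion of defect were generalized.
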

\begin{proof}

Suppose $S$ and $T$ satisfy the first condition, we prove that they differ by defects. Let $1 \leq v_1 < v_2 < \ldots < v_{\ell} \leq |S|$ be the indices where $S$ differs from $T$. The number of such indices must be even, otherwise the number of zeros in one string would be odd while the number of zeros in the other would be even, contradicting the definition of balanced.

If $\ell = 0$, $S$ and $T$ trivially differ by defects. Otherwise consider $v_1$ and $v_2$. The prefix of length $v_1-1$ is balanced, since that prefix is the same between the two strings. Let $z$ and $o$ be the number of zeros and ones in this prefix repsectively. 

If $S[v_1+1] = T[v_1+1] = \texttt{0}$, then both $(z, o)$ and $(z+1,o)$ would be between $S$ and $T$, contradicting the condition above. Similarly, if $S[v_1+1] = T[v_1+1] = \texttt{1}$, then both $(z, o)$ and $(z,o+1)$ would be between $S$ and $T$. 

So $v_2 = v_1+1$. And if $S[v_1] = S[v_2]$, then $(z,o)$, $(z+1,o)$, and $(z,o+1)$ would be between $S$ and $T$. So all the conditions for index $v_1$ to be a defect hold.

Finally, note that since $v_1$ is a defect, the prefixes of $S$ and $T$ with length $v_2$ must be balanced. So the prefix with length $v_3-1$ is balanced, and we can apply the same logic to indices $v_3$ and $v_4$, and so on, for each $v_{2i-1}$ and $v_{2i}$ ($1 \leq i \leq \frac{\ell}{2}$) to show that all indices where $S$ and $T$ differ are either defects, or come directly after a defect.

Now suppose $S$ and $T$ differ by defects. We will first show that there is a unit of area between $S$ and $T$ at $(x,y)$ if and only if there is a defect at index $x+y-1$. One direction is easy ~--- given a defect at index $i$, where there are $z$ zeros and $o$ ones in the prefix of length $i-1$, then $(z,o)$ will be between $S$ and $T$. 

Now consider any unit of area $(x,y)$ between $S$ and $T$. Without loss of generality, say it is above $S$ and below $T$. Since $S$ and $T$ differ by defects, any time they differ, causing those prefixes to become unbalanced, it gets corrected at the next index. So in any prefix $S'$ of $S$, and $T'$ of $T$ where $|S'| = |T'|$, the number of ones in $S'$ differs from the number of ones in $T'$ by at most $1$. So $(x,y)$ must correspond to a prefix of $S$ with $x$ ones and $y+1$ zeros, and a prefix of $T$ with $x+1$ ones and $y$ zeros. $S$ and $T$ only become unbalanced at their defects, so there must be a defect at index $x+y-1$.

Now consider the first defect at position $i$, let $z$ and $o$ be the number of zeros and ones respectively in the prefix of length $i-1$. There will be a unit of area at $(z,o)$. The prefixes of length $i$ are unbalanced, and become balanced again at $i+1$. So the earliest that the next defect can appear is at index $i+2$, which has strictly more zeros and ones. So the next unit of area must have a strictly greater $x$ and $y$ coordinate. And extending this reasoning, no two distinct units of area can share a $x$ or $y$ coordinate.

\end{proof}

\begin{lemma}
\label{fibmap}
Let $g(x,y) = (x-y-1,2y-x+1)$ as above. Let $F_i$ for $i \in \mathbb{Z}^+$ be the $i$-th Fibonacci number. Then for $n \in \mathbb{Z}^+$, $g^n(x,y) = (F_{2n-1}x - F_{2n}y-F_{2n}, -F_{2n}x+F_{2n+1}y+F_{2n+1}-1)$.
\end{lemma}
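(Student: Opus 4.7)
The natural approach is a straightforward induction on $n$, with the main work being verification that the Fibonacci recurrence $F_{k+1} = F_k + F_{k-1}$ produces exactly the coefficients that appear after composing $g$ with the inductive hypothesis.

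For the base case $n=1$, I would just plug in $F_1 = F_2 = 1$ and $F_3 = 2$ and verify that the formula reduces to $g(x,y) = (x-y-1, 2y-x+1)$.

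For the inductive step, assume the formula holds for $n$, and write $(a,b) = g^n(x,y)$ with $a = F_{2n-1}x - F_{2n}y - F_{2n}$ and $b = -F_{2n}x + F_{2n+1}y + F_{2n+1} - 1$. Then $g^{n+1}(x,y) = g(a,b) = (a-b-1,\, 2b-a+1)$. Expanding the first coordinate, the $x$-coefficient becomes $F_{2n-1} + F_{2n} = F_{2n+1}$, the $y$-coefficient becomes $-(F_{2n} + F_{2n+1}) = -F_{2n+2}$, and the constant becomes $-F_{2n} - F_{2n+1} + 1 - 1 = -F_{2n+2}$, matching the claim. For the second coordinate, the $x$-coefficient becomes $-(2F_{2n} + F_{2n-1}) = -(F_{2n} + F_{2n+1}) = -F_{2n+2}$, the $y$-coefficient becomes $2F_{2n+1} + F_{2n} = F_{2n+1} + F_{2n+2} = F_{2n+3}$, and the constant becomes $2F_{2n+1} + F_{2n} - 2 + 1 = F_{2n+3} - 1$, again matching.

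There is really no significant obstacle here, since both the Fibonacci identities $F_{2n-1} + F_{2n} = F_{2n+1}$ and $2F_{2n+1} + F_{2n} = F_{2n+3}$ follow immediately from iterating $F_{k+1} = F_k + F_{k-1}$ once or twice. The only mild bookkeeping concern is the $-1$ in the constant term of the second coordinate of $g$, which propagates cleanly through the induction precisely because the $+1$ in $g(x,y) = (x-y-1, 2y-x+1)$ and the $-1$ already in $b$ combine to reproduce the same shift at the next level. I would present the induction in a single short display, showing the two coefficient computations side by side, and invoke the Fibonacci recurrence explicitly.
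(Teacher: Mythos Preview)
Your proposal is correct and essentially identical to the paper's own proof: both proceed by induction on $n$, check the base case $n=1$ directly against $F_1=F_2=1$, $F_3=2$, and then expand $g$ applied to the inductive hypothesis, collecting the $x$-, $y$-, and constant terms and reducing each via the Fibonacci recurrence (including the two-step identity $2F_{k}+F_{k-1}=F_{k+2}$). The only cosmetic difference is that the paper phrases the inductive step as going from $n-1$ to $n$ rather than from $n$ to $n+1$.
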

\begin{proof}

The proof is by induction, and follows easily from the statement of the lemma. We use $F_1 = 1, F_2 = 1, F_3 = 2, F_4 = 3, F_5 = 5, \ldots$.

\begin{itemize}
\item For $n=1$, we can verify that $g(x,y)=(x-y-1,2y-x+1) = (F_{1}x-F_{2}y-F_2,-F_2x+F_3y+F_3-1)$.
\item Let $n > 1$, and assume the lemma holds for all $n-1$. Then, we just need some dense but straightforward computation:

\begin{align*}
    g^n(x,y) &= g(g^{n-1}(x,y)) \\
    &= g(F_{2n-3}x - F_{2n-2}y-F_{2n-2}, -F_{2n-2}x+F_{2n-1}y+F_{2n-1}-1) \\
    &= ((F_{2n-3}x - F_{2n-2}y-F_{2n-2}) - (-F_{2n-2}x+F_{2n-1}y+F_{2n-1}-1) - 1, \\
    &\quad 2(-F_{2n-2}x+F_{2n-1}y+F_{2n-1}-1) - (F_{2n-3}x - F_{2n-2}y-F_{2n-2}) + 1) \\
    &= (F_{2n-3}x - F_{2n-2}y-F_{2n-2} + F_{2n-2}x-F_{2n-1}y-F_{2n-1}+1 - 1, \\
    &\quad -2F_{2n-2}x+2F_{2n-1}y+2F_{2n-1}-2 - F_{2n-3}x + F_{2n-2}y+F_{2n-2} + 1) \\
    &= ((F_{2n-3}+F_{2n-2})x - (F_{2n-2}+F_{2n-1})y - (F_{2n-2}+F_{2n-1}),\\
    &\quad -(2F_{2n-2}+F_{2n-3})x + (2F_{2n-1}+F_{2n-2})y + (2F_{2n-1}+F_{2n-2}-1)) \\
    &= (F_{2n-1}x-F_{2n}y-F_{2n}, -F_{2n}x+F_{2n+1}y+F_{2n+1}-1) \\
\end{align*}

\end{itemize}

\end{proof}

\begin{lemma}
\label{w-converge}
Let $S$ and $T$ be balanced. Then there exists an integer $n$, such that $w^{n}(S)$ and $w^{n}(T)$ differ by defects.
\end{lemma}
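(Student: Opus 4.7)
The plan is to prove this via the area/defect correspondence established in Lemma~\ref{diffdefect}: we show that after enough applications of $w$, every pair of distinct units of area between the strings ends up with distinct $x$-coordinates and distinct $y$-coordinates. By Lemma~\ref{diffdefect} this is equivalent to differing by defects.

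First, I would observe that Lemmas~\ref{forwardmap} and \ref{backwardmap} together imply that $f(x,y)=(2x+y+1,x+y)$ is a bijection between the units of area lying between $S$ and $T$ and those lying between $w(S)$ and $w(T)$. In particular, there are only finitely many such units, and this finite collection is mapped bijectively under each application of $f$. So the main task is purely combinatorial: show that iterating $f$ eventually separates any two distinct points in both coordinates.

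Next, by induction analogous to Lemma~\ref{fibmap} (or by directly inverting that formula, since $g$ and $f$ are inverses), I would compute
\[
f^n(x,y)=\bigl(F_{2n+1}x+F_{2n}y+c_n,\;F_{2n}x+F_{2n-1}y+d_n\bigr)
\]
for Fibonacci-number coefficients $F_i$ and some integer constants $c_n,d_n$ that are irrelevant to separation. Given two distinct units of area $(x_1,y_1)\neq(x_2,y_2)$ lying between $S$ and $T$, their images under $f^n$ have equal first coordinate iff
\[
F_{2n+1}(x_1-x_2)+F_{2n}(y_1-y_2)=0,
\]
and equal second coordinate iff
\[
F_{2n}(x_1-x_2)+F_{2n-1}(y_1-y_2)=0.
\]
Using $\gcd(F_{2n+1},F_{2n})=\gcd(F_{2n},F_{2n-1})=1$, either equation forces $F_{2n}$ (resp.\ $F_{2n-1}$) to divide $x_1-x_2$. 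Because all relevant units of area lie in the bounded box $[0,0_S]\times[0,1_S]$, the differences $|x_1-x_2|$ and $|y_1-y_2|$ are bounded by a constant depending only on $S,T$. Since Fibonacci numbers grow without bound, there is some $N$ past which no such nontrivial divisibility can hold for any of the (finitely many) pairs of distinct units of area. For $n\geq N$, every pair of distinct units between $w^n(S)$ and $w^n(T)$ therefore has distinct first and distinct second coordinates, and Lemma~\ref{diffdefect} yields the desired conclusion.

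The routine obstacle will be the closed-form computation of $f^n$ and its constants, which is a direct induction mirroring Lemma~\ref{fibmap} and presents no real difficulty. The only genuinely delicate point is the quantitative separation argument: confirming that the bound on coordinate differences truly depends only on $S$ and $T$ (not on $n$), so that the growth of $F_{2n\pm 1}$ eventually beats every pairwise difference. Once that is set up, finiteness of the pair set lets us take a single $N$ that works uniformly.
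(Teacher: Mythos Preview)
Your proposal is correct and follows essentially the same strategy as the paper: reduce to the coordinate-separation criterion of Lemma~\ref{diffdefect}, then exploit the Fibonacci growth of the iterated linear map against the boundedness of the units of area between $S$ and $T$. The only difference is direction---the paper pulls units between $w^n(S)$ and $w^n(T)$ back via $g^n$ (already computed in Lemma~\ref{fibmap}) and observes the $y$-difference is a Fibonacci multiple, whereas you push forward via $f^n$ and use a coprimality argument; both work, though the backward route saves you the separate induction for $f^n$ and the $\gcd$ step.
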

\begin{proof}
Let $n = |S|+1$. We will show that for any two distinct units of area $(x,y)$ and $(x',y')$ between $w^{n}(S)$ and $w^{n}(T)$, at least one of $x \neq x'$ and $y \neq y'$ is true.

Let $(x,y)$ and $(x',y')$ be between $w^n(S)$ and $w^n(T)$. Assume for sake of contradiction that $x = x'$, and let $\delta = y' - y$. From Lemma \ref{backwardmap}, $(x,y)$ and $(x',y')$ are between $w^n(S)$ and $w^n(T)$ iff $g^n(x,y)$ and $g^n(x',y')$ are between $S$ and $T$. And using the result from Lemma \ref{fibmap}, this means that the $y$-coordinates of $g^n(x,y)$ and $g^n(x',y')$ differ by

\begin{align*}
    (-F_{2n}x+F_{2n+1}y + F_{2n+1}-1) - (-F_{2n}x'+F_{2n+1}y'+F_{2n+1}-1) &= F_{2n+1}(y-y') \\
    &= F_{2n+1} \delta
\end{align*}

Since $\delta \neq 0$, this means that there must be two units of area between $S$ and $T$, whose $y$-coordinates differ by at least $F_{2n+1} \geq n = |S|+1$. But a unit of area can only be between $S$ and $T$ if its $y$-coordinate ranges from $0 \leq y \leq 1_S \leq |S|$. So these two units of area cannot exist, and no two distinct units of area between $w^n(S)$ and $w^n(T)$ have the same $x$-coordinate.

Similarly, we can assume for sake of contradiction that $y=y'$, and now let $\delta = x'-x$. Using the same argument, the corresponding units of area between $S$ and $T$ would have to have $y$-coordinates differing by

\begin{align*}
    (-F_{2n}x+F_{2n+1}y + F_{2n+1}-1) - (-F_{2n}x'+F_{2n+1}y'+F_{2n+1}-1) &= F_{2n}(x-x') \\
    &= F_{2n} \delta
\end{align*}

Meaning the $y$-coordinates would have to differ by at least $F_{2n} \geq n = |S|+1$, again a contradiction.

So $(x,y) \neq (x',y') \Rightarrow x\neq x' \text{ and } y \neq y'$, and we can apply Lemma \ref{diffdefect} to see that $w^n(S)$ and $w^n(T)$ differ by defects.

As a side note, $n=|S|+1$ is a very loose bound. Since the difference in coordinates grows on the order of the Fibonacci numbers, the actual number of applications of $w$ needed is asymptotically $\log_{\phi^2}(|S|)$. 

\end{proof}

\begin{corollary}[Convergence]
\label{corollary:convergence}
    Given any two bitstrings $S$ and $T$ of equal length, after applying a finite number of Wythoff updates, they will differ by defects.
\end{corollary}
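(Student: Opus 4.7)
The plan is to chain together the two principal results of Section~4. First I would invoke the balancing lemma (the first lemma of Section~4) applied to the given strings $S$ and $T$: after some finite number of Wythoff updates the running strings $S_{t_0}$ and $T_{t_0}$ become balanced. From this moment on I switch from reasoning about individual Wythoff updates to reasoning about $w$. Recall the remark preceding the bulleted facts about $w$: a single application of $w$ to a string $X$ is the same as performing exactly $2\cdot 0_X + 1_X$ Wythoff updates. Because balanced strings share the same value of $2\cdot 0 + 1$, a single scheduled batch of $2\cdot 0_{S_{t_0}} + 1_{S_{t_0}}$ Wythoff updates simultaneously sends $S_{t_0} \mapsto w(S_{t_0})$ and $T_{t_0} \mapsto w(T_{t_0})$.

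Next I would observe that balancedness is preserved under $w$. The identities $0_{w(X)} = 2\cdot 0_X + 1_X$ and $1_{w(X)} = 0_X + 1_X$ depend only on the zero- and one-counts of $X$, so two balanced inputs yield balanced outputs. Therefore the parallel application of $w$ can be iterated indefinitely in batches of Wythoff updates, and after $n$ such rounds the resulting pair is $(w^n(S_{t_0}), w^n(T_{t_0}))$, still balanced. Choosing $n$ as in Lemma~\ref{w-converge} produces a pair that differs by defects, which is exactly what the corollary asks for, achieved after a finite total number of Wythoff updates.

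The main obstacle is conceptual rather than computational: a single Wythoff update need not preserve the relative one-count between $S$ and $T$, because the update increases the number of ones by $1$ or by $0$ depending on the leading bit. In particular one cannot simply quote Lemma~\ref{w-converge} directly, since that lemma assumes its inputs are already balanced. The balancing lemma is precisely what bridges this gap, and once balance is achieved the rest reduces to scheduling Wythoff updates into $w$-batches so that the two strings advance in lockstep.
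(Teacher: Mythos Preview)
Your proposal is correct and follows exactly the paper's approach: first invoke the balancing lemma, then apply Lemma~\ref{w-converge} via iterated applications of $w$. You have simply made explicit two points the paper leaves implicit---that balanced strings share the same batch size $2\cdot 0+1$ so that $w$ can be applied to both in lockstep, and that $w$ preserves balancedness---both of which are needed and both of which you justify correctly.
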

\begin{proof}
    Apply Wythoff updates until the strings are balanced. Then repeatedly apply $w$ (which is equivalent to applying a finite number of Wythoff updates) as prescribed in Lemma \ref{w-converge} until the strings differ by defects.
\end{proof}

\begin{corollary}[Defect Preservation]
    Let $S$ and $T$ differ by defects. Then $w(S)$ and $w(T)$ also differ by the same number of defects.
\end{corollary}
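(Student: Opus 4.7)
The plan is to show $w(S)$ and $w(T)$ differ by defects via Lemma \ref{diffdefect}, and then invoke the bijection between units of area to equate the defect counts. First I would note that the maps $f(x,y)=(2x+y+1,x+y)$ and $g(x,y)=(x-y-1,2y-x+1)$ from Lemmas \ref{forwardmap} and \ref{backwardmap} are mutual inverses, and together they yield a bijection between the units of area between $S,T$ and the units of area between $w(S),w(T)$. Consequently the two areas are equal, and by Lemma \ref{diffdefect} the area between $S,T$ already equals the number of defects of $S,T$. So if I can prove that $w(S),w(T)$ also differ by defects, the defect count will match for free.

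To show $w(S),w(T)$ differ by defects, by Lemma \ref{diffdefect} it suffices to check that any two distinct units of area $(x_1,y_1)\neq(x_2,y_2)$ between $w(S)$ and $w(T)$ satisfy $x_1\neq x_2$ and $y_1\neq y_2$. Suppose for contradiction that they share a coordinate; without loss of generality take $x_1=x_2=x$ and $y_1<y_2$. Applying $g$ produces two units of area $(a_i,b_i)=(x-y_i-1,\,2y_i-x+1)$ between $S$ and $T$ with $a_1>a_2$ but $b_1<b_2$ --- an \emph{anti-monotone} pair. The symmetric case $y_1=y_2$ similarly produces a pair with $a_1<a_2$ but $b_1>b_2$.

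The final step is to rule out such anti-monotone pairs when $S$ and $T$ differ by defects. This requires slightly more than the bare statement of Lemma \ref{diffdefect}, but it is exactly what the final paragraph of that lemma's proof establishes: because consecutive defects are separated by at least two indices (a defect unbalances the prefix at index $i$, which is restored at $i+1$, so the next defect cannot occur before index $i+2$), the units of area between $S$ and $T$, listed in order of defect index, have strictly increasing $x$-coordinates \emph{and} strictly increasing $y$-coordinates simultaneously. Hence no anti-monotone pair can occur, contradicting the pair $(a_1,b_1),(a_2,b_2)$. I expect this identification of simultaneous monotonicity as the true engine of Lemma \ref{diffdefect} to be the main subtlety; the stated conclusion (``distinct rows and distinct columns'') by itself is too weak, since it does not forbid an anti-monotone pair.
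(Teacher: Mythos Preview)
Your proposal is correct and follows essentially the same route as the paper. The paper argues forward via $f$: it notes that the units of area between $S$ and $T$ are simultaneously monotone in both coordinates (citing Lemma~\ref{diffdefect}), observes that $f$ preserves this monotonicity, and concludes that the images---which by the $f$/$g$ bijection are exactly the units of area between $w(S)$ and $w(T)$---have distinct $x$- and $y$-coordinates. You argue the contrapositive via $g$, pulling a hypothetical coordinate-sharing pair back to an anti-monotone pair between $S$ and $T$. Both directions rest on the same two ingredients: the $f$/$g$ bijection and the simultaneous-monotonicity fact. You are right that this monotonicity is stronger than the literal statement of Lemma~\ref{diffdefect} and is only established in the last paragraph of its proof; the paper invokes it as ``from Lemma~\ref{diffdefect}'' without flagging the distinction, so your version is, if anything, more careful on that point.
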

\begin{proof}
    From Lemma \ref{diffdefect}, we know that for any two units of area $(x,y) \neq (x',y')$, either $x<x'$ and $y<y'$, or $x>x'$ and $y>y'$. Suppose that $x<x'$, then we can see that $2x+y+1 < 2x'+y'+1$ and $x+y<x'+y'$. So no two units of area in $w(S)$ and $w(T)$ share the same $x$ or $y$ coordinate, and we apply lemma \ref{diffdefect} again to see that they differ by defects.

    And the number of defects is equal to the area between $S$ and $T$. Since the area is preserved, the number of defects is preserved.
\end{proof}

\section{Asymptotic Equality and Offset}

We will now connect the results of the previous section about the
evolution of bitstrings under the Wythoff Update to the set of
P-positions of the two games.

Given a set $S$ of P-positions of a game, let 
$S_{(\Delta x, \Delta y)}$ be the same set offset by
$(\Delta x, \Delta y)$.  That is

$$
S_{(\Delta x, \Delta y)} = \{(x+\Delta x, y+\Delta y) \mid (x,y) \in S\}
$$

Let $S$ and $T$ be two infinite sets of points in $\mathbb{Z}^2$.  Let
$B_n = \{(x,y) \mid x^2+y^2\leq n^2\}$
We say that $S$ and $T$ are \textit{asymptotically equal} if
$$
\lim_{n \rightarrow \infty} \frac{|(S\cap T)\cap B_n|}{|(S\cup T)\cap B_n|} = 1
$$

% Note that it would have been equivalent to state the definition as:
% $$
% \lim_{n \rightarrow \infty} \frac{|((S\setminus T) \cup (T \setminus S))\cap B_n|}{|(S\cup T)\cap B_n|} = 0
% $$
% Furthermore, using this formulation it's easy to see that asymptotic equivalence
% defines an equivalence relation, which we will denote as $\sim$.

It's not hard to see that asymptotic equivalence defines an equivalence
relation on subsets of $\mathbb{Z}^2$, which we will denote as $\sim$.

\begin{theorem}[Unique Offset Theorem]
  \label{theorem:unique-offset}
  Let $A$ be the set of P-positions of an altered \wythoff{} game.  Let
  $B$ be the set of P-positions of the natural \wythoff{} game.
  Then there exists a unique offset $(\Delta x, \Delta y)$ such
  that $A_{(\Delta x, \Delta y)}$ is asymptotically equal to $B$.
\end{theorem}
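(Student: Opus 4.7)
The plan is to compare the two games column by column after both have saturated by tracking their state strings under the Wythoff Update, then to invoke the Convergence Corollary and Defect Preservation to show that the discrepancies become arbitrarily sparse. First, fix a natural column $t_0$ from which both the altered game $A$ and the natural Wythoff game $B$ are saturated, using the Saturation Lemma. For $t \geq t_0$ the String Update Lemma says each game's state string evolves by a single Wythoff update per column. Set $\Delta x := |S^A_{t_0}| - |S^B_{t_0}|$; since each Wythoff update increases length by exactly one, $|S^A_t| = |S^B_{t + \Delta x}|$ for every $t \geq t_0$ (increasing $t_0$ if necessary so that both games are already saturated in both of the compared columns).

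Next, apply the Convergence Corollary to the equal-length strings $S^A_{t_0}$ and $S^B_{t_0 + \Delta x}$: there is a time $t_1 \geq t_0$ at which $S^A_{t_1}$ and $S^B_{t_1 + \Delta x}$ differ only by defects; let $k$ denote this defect count. When the two strings agree bitwise at some $t \geq t_1$, the leading bit agrees and both games execute the same Case 1 or Case 2 from the String Update Lemma; a direct calculation using $|S^A_t| = |S^B_{t+\Delta x}|$ shows that the row difference between the two newly produced P-positions equals $\ell^B_{t+\Delta x} - \ell^A_t$ in either case, and this quantity is preserved by the shared update step. Define $\Delta y$ to be this common row difference, evaluated at time $t_1$.

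The core claim is that $A_{(\Delta x, \Delta y)}$ and $B$ disagree on only a sparse set of columns. Starting from $t_1$, iterate $w$: by the Defect Preservation Corollary the defect count stays at $k$ after every $w$-iteration, and by Lemmas~\ref{forwardmap} and~\ref{backwardmap} each unit of area between the two string curves is carried by the bijection $f(x,y) = (2x + y + 1, x + y)$, whose linear part has dominant eigenvalue $\phi^2$. Hence the string length and the number of columns processed both grow by a factor of $\phi^2$ per $w$-iteration, while each defect can produce only $O(1)$ leading-bit disagreements (and hence $O(1)$ column-by-column P-position mismatches) per $w$-iteration. Summing over iterations, the total number of mismatched columns up to column $T$ is $O(k \log T)$, sublinear in $T$. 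Each game contributes $\Theta(n)$ P-positions in the ball $B_n$, so the ratio of matching positions tends to $1$, proving $A_{(\Delta x, \Delta y)} \sim B$. The main obstacle here will be making the informal claim ``each defect yields $O(1)$ leading-bit disagreements per $w$-cycle'' precise; the cleanest route is to trace how a defect migrates to the front of the string over the course of a single $w$-cycle and verify that its consumption creates only a bounded number of bad columns before the string returns to defect form.

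For uniqueness, suppose both $(\Delta x, \Delta y)$ and $(\Delta x', \Delta y')$ satisfy the theorem. Transitivity of $\sim$ gives $B \sim B_{(a, b)}$ with $(a, b) := (\Delta x - \Delta x', \Delta y - \Delta y')$. Using $\lfloor \phi^2 n \rfloor = \lfloor \phi n \rfloor + n$, a positive-density matching of upper-beam P-positions of $B$ with upper-beam positions of $B_{(a, b)}$ requires $\lfloor \phi(n + a - b)\rfloor - \lfloor \phi n\rfloor = -a$ for a positive density of $n$; the irrationality of $\phi$ then forces $\phi(a - b) \in \mathbb{Z}$, hence $a = b$, and then $a = 0$, so $b = 0$ as well. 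Matches between the upper beam of $B$ and the lower beam of $B_{(a, b)}$ concern only finitely many $n$ and contribute no positive density. Thus $(a, b) = (0, 0)$, and the offset is unique.
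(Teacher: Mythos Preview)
Your proof follows essentially the same route as the paper: fix $\Delta x$ by matching string lengths after saturation, run forward until the strings differ by defects, read off $\Delta y$ from the $\ell$ values, and conclude asymptotic equality from the fact that the defect count stays fixed while the string length grows. The paper dispenses with your eigenvalue/$O(k\log T)$ accounting and simply observes that the defect density in the string tends to zero; your acknowledged ``main obstacle'' (that a leading defect costs only $O(1)$ bad columns and then restores defect form) is exactly the point the paper also leaves implicit. For uniqueness the paper argues geometrically---the two beams of $B$ have distinct slopes, so any nonzero translate must eventually separate one beam entirely from its counterpart, capping the limit at $1/2$---whereas your irrationality-of-$\phi$ computation is a more explicit alternative reaching the same conclusion.
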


\begin{proof}
  By Lemma~\ref{lemma:saturation} we know that there is a column $t$
  in the altered game $A$ that is saturated, and all subsequent
  columns are saturated.  The length of the corresponding bitstring
  $S_t$ at time $t$ is $u_t - \ell_t + 1$, and this increases by one
  on each step.

  For the natural \wythoff{} game $B$ at time $t$ the length of the
  bitstring is $t+1$.  So $\Delta x$ is the amount of ``head start''
  we are going to give the natural game so that the two games align.
  So we have to define $\Delta x$ such that
  $$
  \Delta x + t + 1 = u_t - \ell_t + 1
  $$

  So $\Delta x = u_t - \ell_t - t$.  (Note that this is not a function
  of $t$ because both $u_t - \ell_t$ and $t$ increase by on one each time step.)

  As for the value of $\Delta y$,
  Corollary~\ref{corollary:convergence} tells us that there is a time $t$
  when the two games will have bitstrings that just differ by defects.
  Let $y_A = \ell_t$ in the altered game A.  We also need to look at
  what is happening in the natural game $B$ at time $t+\Delta x$.  It
  has a corresponding $y_B = \ell'_{t+\Delta x}$ (where $\ell'$
  indicates that it is taken from the natural game).  So we define
  $\Delta y = y_B - y_A$.  And shifting the altered game up by $\Delta
  y$ puts the altered game and the natural game into alignment.

  Now as we run both of the games forward, the only points in time
  when different P-positions are generated for the two games is when a
  defect is being processed.  A defect will cause one of the games to
  generate a P-position in the lower beam (Case 1 in the proof of
  Lemma~\ref{lemma:saturation}), and the other game to
  generate a P-position in the higher beam (Case 2 in the proof of
  Lemma~\ref{lemma:saturation}).  Then in the next step, the
  other bit of the defect is processed, and the roles are reversed.

  The bitstrings get longer and longer over time, but the number of
  defects remains the same.  Therefore the density of the defects in
  the bitstrings goes to zero.  It immediately follows that the two
  sets of P-positions (the translated altered game and the natural
  game) are asymptotically equal.

  We also need to show the uniqueness of the offset $(\Delta x, \Delta
  y)$.  Asymptotic equivalence is is an equivalence relation, and thus satisfies
  transitivity.  So if there are three sets $P$, $Q$, and $R$, then $P
  \sim Q$ and $Q \sim R$ implies $P \sim R$.
  
%   The first observation is that asymptotic equality
%   transitive.  (To see this it helps to note that sets $P$ and $Q$
%   being asymptotically equal is equivalent to the fraction of points
%   in $P$ but not in $Q$ goes to zero (and vice versa) as the the ball
%   $B_n$ goes to infinity.)  So denote this equivalence relation by
%   $\sim$.  Then if there are three sets $P$, $Q$, and $R$, then $P
%   \sim Q$ and $Q \sim R$ implies $P \sim R$.

%  So as in the theorem statement $A$ is the set of P-positions of
%  \wythoff{}, and $B$ is the set of P-positions of some altered game
%  of \wythoff{}.

  Suppose that $A_{(\Delta x, \Delta y)} \sim B$, and also that
  $A_{(\Delta x', \Delta y')} \sim B$ for two distinct offsets.  It
  follows that $B_{(-\Delta x, -\Delta y)} \sim A \sim B_{(-\Delta
    x',-\Delta y')}$.  From which we infer by transitivity that:
  \begin{eqnarray*}
    & \\
    B_{(-\Delta x, -\Delta y)} & \sim & B_{(-\Delta x',-\Delta y')} \\
    B & \sim & B_{(\Delta x-\Delta x',\Delta y-\Delta y')} =
    B_{(\Delta x'', \Delta y'')}  \\
  \end{eqnarray*}
  where $(\Delta x'', \Delta y'')$ is a non-zero integer vector.  This
  implies we can keep moving the elements of the set $B$ by $(\Delta
  x'', \Delta y'')$, over and over again and continue to have
  asymptotically equality.  It's clearly impossible because all of the
  P-positions of $B$ lie very close two lines of different slopes.
  One of the lines must eventually have no overlap with its
  counterpart in the other set.  So the limit in the definition of
  asymptotic equality will have a value of at most $1/2$.
\end{proof}

\subsection{How to Compute the Offset}

After experimenting with various types of altered games, we were able to make
the following conjecture, which turned out to be true.  It will be proven below.
Let $\mbox{Box}(a,b)$ denote a box of width $a$ and height $b$ whose lower left
corner is $(0,0)$.
% $[0,a-1]\times [0,b-1] \in \mathbb{Z}^2$.

\begin{theorem}[Offset of a Box]
  \label{theorem:box-offset}
  Considered the altered game $A(\mathcal{P},\mathcal{N})$ where $\mathcal{N} =
  \{\}$ and $\mathcal{P} = \mbox{Box}(a,b)$.  Then $A_{(b-1,a-1)} \sim B$, where
  $B$ is the set of P-positions of the unaltered game of \wythoff{}.
\end{theorem}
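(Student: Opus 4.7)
The plan is to invoke Theorem~\ref{theorem:unique-offset} and pin down the values of $\Delta x$ and $\Delta y$ by a direct computation. First I would determine the altered game's bitstring immediately after the box has been processed, which gives $\Delta x$ from the length-matching identity that appears in the proof of Theorem~\ref{theorem:unique-offset}. Then I would introduce a one-line invariant relating $\ell$ to the zero-count of the current bitstring, which turns $\Delta y$ into a single subtraction once the altered and natural games' bitstrings are known to be balanced at some pair of aligned times.

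I would start by processing columns $0, 1, \ldots, a-1$ of the altered game. In each such column, rows $0, \ldots, b-1$ are declared P, so any cell $(t,y)$ with $y \ge b$ in one of these columns can move to the declared P-position $(t,0)$ and is therefore N; hence columns $0$ through $a-1$ contain no P-positions other than the declared ones. Reading off which rows and diagonals become covered, I would find $\ell_{a-1} = 0$ (witnessed by the P-position $(a{-}1, 0)$), $u_{a-1} = a+b-2$ (witnessed by $(0, b{-}1)$), every intermediate diagonal also realized by some lattice point of the box, and bitstring $S_{a-1} = \texttt{1}^b\texttt{0}^{\,a-1}$. So column $a-1$ is already saturated, and the length-matching identity from the proof of Theorem~\ref{theorem:unique-offset} gives
\[
\Delta x \;=\; u_{a-1} - \ell_{a-1} - (a-1) \;=\; (a+b-2) - 0 - (a-1) \;=\; b-1.
\]

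For $\Delta y$, the key observation is that the quantity (number of zeros in the current bitstring) $-\;\ell$ is invariant under any Wythoff Update: Case~1 touches neither quantity (a leading $\texttt{0}$ becomes a $\texttt{1}$ and a fresh $\texttt{0}$ is appended), and Case~2 increases each by exactly one. For the altered game this invariant evaluates to $(a-1) - 0 = a - 1$ at column $a-1$, so $\ell_A^{(t)} = 0_A^{(t)} - (a-1)$ at every later time $t$. For the natural Wythoff game, started at column $0$ from the single-character bitstring $\texttt{1}$ with $\ell = 0$, the invariant evaluates to $0$, so $\ell_B^{(t')} = 0_B^{(t')}$ at every time $t'$.

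To finish, I would compare the altered game at time $t \ge a-1$ with the natural game at time $t + b - 1$; by construction their bitstrings have equal length for every such $t$. Corollary~\ref{corollary:convergence} then guarantees that at some time the two bitstrings differ only by defects, and in particular are balanced, so that $0_A^{(t)} = 0_B^{(t + b - 1)}$. The two invariants combine to give
\[
\Delta y \;=\; \ell_B^{(t + b - 1)} - \ell_A^{(t)} \;=\; 0_B^{(t + b - 1)} - \bigl(0_A^{(t)} - (a - 1)\bigr) \;=\; a - 1,
\]
and Theorem~\ref{theorem:unique-offset} delivers $A_{(b-1,\,a-1)} \sim B$. The main obstacle I anticipate is the careful verification of the state at column $a-1$ (making sure no unexpected P-positions arise inside or just above the box and that every diagonal between $\ell$ and $u$ is indeed hit); once that is in hand, spotting the $0_X - \ell_X$ invariant reduces the computation of $\Delta y$ to a single line.
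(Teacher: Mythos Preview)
Your argument is correct. You compute the post-box state $(\ell_{a-1},u_{a-1},S_{a-1})=(0,\,a+b-2,\,\texttt{1}^{b}\texttt{0}^{\,a-1})$, observe that column $a-1$ is already saturated and satisfies the row-fill conclusions of Lemma~\ref{lemma:row-fill-property}, so the Wythoff Update governs $S_{a-1}\to S_a\to\cdots$, and then read off $\Delta x=b-1$ from the length identity. Your invariant $0_{S_t}-\ell_t$ is genuine: Case~1 changes neither summand and Case~2 increments both, so it is preserved; evaluating it at the box state and at the natural game's initial state gives the values $a-1$ and $0$, and balancing then forces $\Delta y=a-1$.

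The paper takes a different route: it first proves a General Offset Theorem (Theorem~\ref{theorem:general-offset}) giving the formula $(\Delta x,\Delta y)=(d-c,\,d-r)$ for \emph{any} altered game, where $c,r,d$ count the columns, rows, and diagonals hit by P-positions in the altered region, and then specializes to the box with $c=a$, $r=b$, $d=a+b-1$. Your invariant is in fact the same quantity in disguise: after saturation one has $d=u-\ell+1$ and $r=\ell+1_S$, so $d-r=u-2\ell+1-1_S=0_S-\ell$. What the paper's organization buys is a reusable formula valid for arbitrary $(\mathcal{P},\mathcal{N})$; what your approach buys is a self-contained proof of the box case that avoids stating and proving the general theorem, at the cost of a slightly longer direct computation.
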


\begin{figure}[hbt!]
  \centering
  \includegraphics[width=3in]{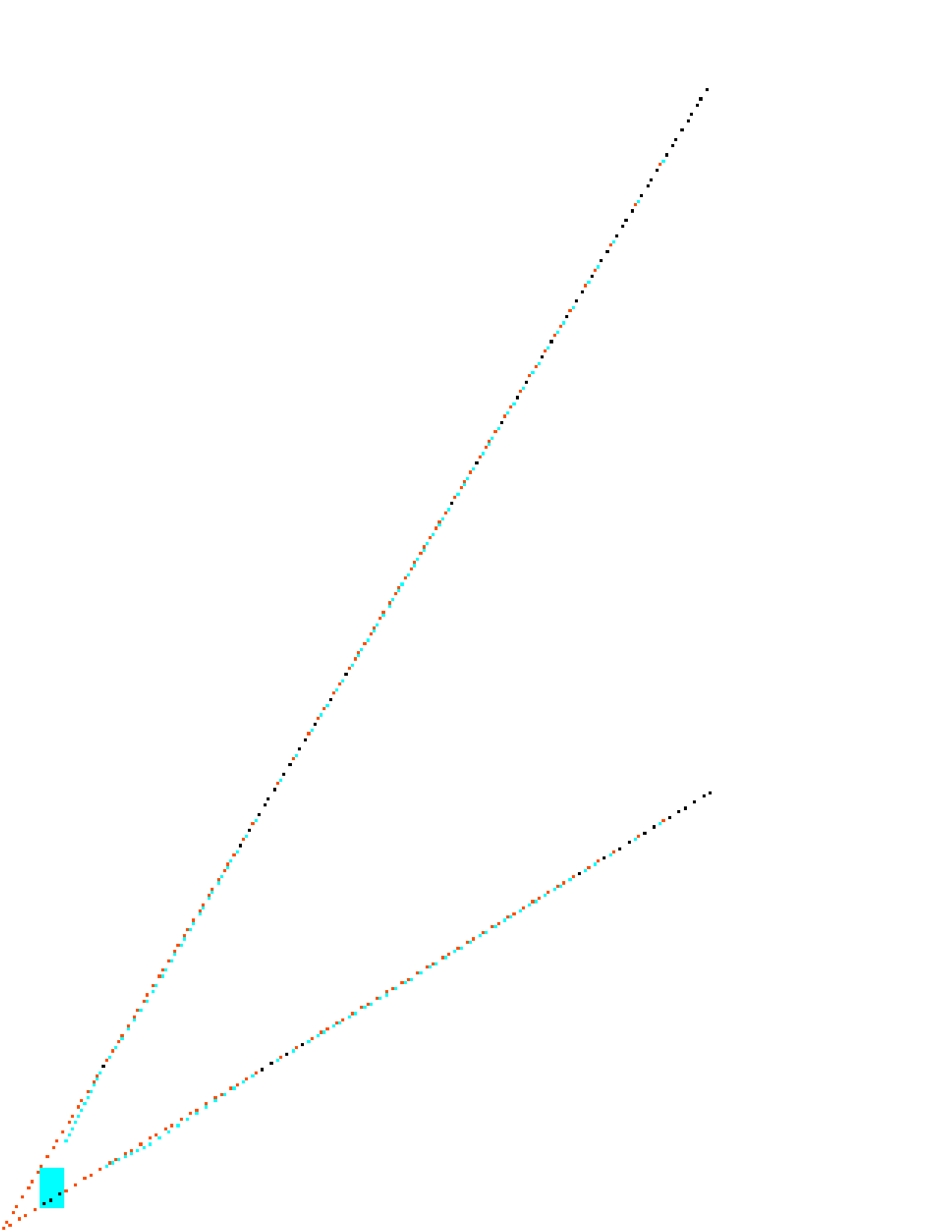}
  \captionsetup{width=.9\linewidth}
  \caption{This diagram illustrates Theorem~\ref{theorem:box-offset}.  The
    P-positions of \wythoff{} are shown in red.  The P-positions of the game
    altered by making all the cells in $\mbox{Box}(8,13)$ be P-positions
    (then offsetting it by a vector (12,7)) are shown in blue.  The P-positions within
    both games are shown in black.}
  \label{fig:n-box-8-13}
\end{figure}

Consider a general altered game $A(\mathcal{P}, \mathcal{N})$.  Let $t-1$ be
the column that is the last one containing any elements of the altered sets.
For this altered game, we can run the algorithm for computing the P-positions
described in section 2 of this paper.  We run it for each column from $0$ to
$t-1$.  After we're done with this process we can examine the situation.  We
compute three quantities from it.
\begin{eqnarray*}
  r & = & \mbox{The number of rows that have P-positions in them} \\
  d & = & \mbox{The number of diagonals that have P-positions in them} \\
  c & = & \mbox{The number of columns that have P-positions in them} \;\; = \;\; t
\end{eqnarray*}

With these definitions in mind, we can now state the following theorem.

\begin{theorem}[General Offset Theorem]
\label{theorem:general-offset}
Let $A$ be the P-positions of an altered game of
\wythoff{}, where $r$, $d$, and $c$ are computed as described above.  And let
$B$ be the P-positions of \wythoff{}.  Then
$A_{(d-c, d-r)} \sim B$.
\end{theorem}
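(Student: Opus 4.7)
The plan is to invoke the Unique Offset Theorem, which already guarantees the existence and uniqueness of some $(\Delta x, \Delta y)$ with $A_{(\Delta x, \Delta y)} \sim B$, and then to identify that offset explicitly as $(d-c, d-r)$.

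First I would prove an invariance lemma: if we extend the computation past column $t-1$ and let $d_s, r_s, c_s$ denote the analogues of $d, r, c$ after processing columns $0, \ldots, s$, then both $d_s - c_s$ and $d_s - r_s$ are independent of $s$ for every $s \geq t-1$. Each column $s+1$ in this range is natural (we may take $m_x = t$), so it receives exactly one P-position $(s+1, y^*)$ via the standard algorithm. For that position to be labeled P, both \texttt{row$_s$[$y^*$]} and \texttt{diag$_s$[$y^*-1$]} must be FALSE, meaning that row $y^*$ and diagonal $y^* - (s+1)$ were previously unused. Hence $r, d, c$ each grow by exactly one, preserving $d-c$ and $d-r$.

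Next I would evaluate the invariants at some $s$ late enough that the altered game is saturated (Lemma~\ref{lemma:saturation}) and $S_s$ differs by defects from the natural-game bitstring $S'_{s+\Delta x}$ (Corollary~\ref{corollary:convergence}), which in particular forces the two bitstrings to be balanced. Saturation yields $d_s = u_s - \ell_s + 1$, and Lemma~\ref{lemma:row-fill-property} together with the observation that no row above $u_s$ is used gives $r_s = \ell_s + 1_{S_s}$, where $1_{S_s}$ counts the ones in $S_s$. With $c_s = s + 1$, we read off $d_s - c_s = u_s - \ell_s - s$, which is precisely the expression for $\Delta x$ derived in the proof of Theorem~\ref{theorem:unique-offset}.

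For $\Delta y$ I would appeal to the formula $\Delta y = \ell'_{s+\Delta x} - \ell_s$ from the same proof, where primes denote natural-game quantities. Since every natural-game column contains one P-position in a distinct row (Beatty's theorem), we have $r'_{a'} = a'+1$, and the analogous row-fill identity gives $\ell'_{a'} = a' + 1 - 1_{S'_{a'}}$. Substituting $a' = s + \Delta x = u_s - \ell_s$ and using $1_{S'_{a'}} = 1_{S_s}$ (from balancedness) yields
\[
\Delta y = (u_s - \ell_s + 1) - (\ell_s + 1_{S_s}) = d_s - r_s.
\]
Combined with the invariance lemma, this gives $(\Delta x, \Delta y) = (d_{t-1} - c_{t-1}, d_{t-1} - r_{t-1}) = (d - c, d - r)$, and the theorem follows.

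The main obstacle lies in bridging the bitstring machinery of Section~4 with the combinatorial counts $r, d, c$: I need to fix a single $s$ at which both the saturation structure and the bitstring convergence have taken effect, and then verify that the invariance really does extend back across the boundary between the altered columns and the first natural column $t$, despite the fact that altered columns may contain declared P-positions sharing rows or diagonals.
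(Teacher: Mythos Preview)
Your proposal is correct and follows essentially the same route as the paper: both establish that $d-c$ and $d-r$ are invariant once natural columns are reached, identify $\Delta x$ with the diagonal-count discrepancy $u_s-\ell_s-s$, and extract $\Delta y$ from $\ell'-\ell$ at a time when the two bitstrings are balanced (the paper phrases this last step as ``the rows above $\ell$ and $\ell'$ have the same number used,'' which is exactly your $1_{S_s}=1_{S'_{a'}}$). Your closing worry about ``extending back across the boundary'' is unnecessary---the invariance begins at $s=t-1$, after all altered columns have been processed, and only runs forward, so declared P-positions sharing rows or diagonals merely determine the \emph{initial} values $r,d,c$ and never the subsequent unit increments.
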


\begin{proof}

For columns $t$ and beyond, one P-position is added.  And this P-position has
the property that it increases by one the number of diagonals covered, the number
of rows covered and the number of colums covered.  The same holds for the
natural game starting from column $0$.

If we start running the two games at the same time, after computing column $t-1$
the altered game will have $d$ diagonals, and the natural game will have $t$
diagonals.  We want these to be equal, but the number of excess diagonals in the
altered game is $d-t$.  Therefore we want to ``delay'' the start of the altered
game (by shifting it to the right) by $\Delta x = d-t = d-c$.  With this shift
the two games will eventually have exactly the same number of diagonals used.
(Note that this offset could be positive or negative.)

To compute the $\Delta y$ offset required we consider the number of rows covered
by the two games.  After computing column $t-1$ of the altered game we know that
it has used $r$ rows.  After computing the corresponding column for the natural
game (column $\Delta x + t-1$) we know that it has used $\Delta x + t$ rows.  But
$\Delta x + t = d-c+c = d$.  This means that going forward the altered game will
always have used $r-d$ more rows than the natural game.  (This of course could
be positive or negative.)

So let's skip ahead in time to a moment when the two games bitstrings differ by
defects.  We're going to consider the two values of $\ell$ from section 3 of the
paper.  Let $\ell$ be its value in the altered game at this time and let $\ell'$
be its value in the natural game at this time.  All the rows of each game below
their respective $\ell$s are used.  And as for the rows above $\ell$ and $\ell'$
respectively, there are the same number of rows used in both games.  Therefore
we know that $\ell-\ell' = r-d$.  Therefore $\ell'-\ell = d-r = \Delta y$, and
this is how much we have to shift up the altered game to align them.
\end{proof}

\begin{proof}[Proof Of Theorem~\ref{theorem:box-offset}]
    We can simply apply Theorem~\ref{theorem:general-offset}.  For
    $\mbox{Box}(a,b)$ we have the $c=a$, $r=b$, and $d=a+b-1$.  So the offset of
    the altered game making it equivalent to the natural game is $(d-c,d-r) =
    (a+b-1-a, a+b-1-b) = (b-1,a-1)$.
\end{proof}

\section{Final Remarks}

% experiments indicate that for the class of
% linear nimhoff (reference) games alterations do not change
% the ``general flavor'' of the set of P positions.
% 
% We were unable to find another game with a similar response to
% alterations. Reference Mirabel's thesis for the (a,0) (0,b) (1,1)
% game.
%
% should we include the result about altered 2-pile nim?
%
% should we include the result about n-pile nim where all of
% this cube: [0,s-1]^n are N-positions? (s is a power of 2).

\textit{Linear Nimhoff} is a class of two-pile Nim games defined by
listing a set of \textit{rules}.  Each rule is described by a pair of
non-negative integers $(a,b)$.  Such a rule allows the player to move by
converting a pair of pile sizes $(x,y)$ to $(x-ka, y-kb)$, where $k$
is a positive integer, so long as the latter vector is non-negative.
Linear Nimhoff was defined by Friedman
\textit{et. al.}~\cite{geometric-analysis:2017}, extending the work of
Larsson~\cite{larsson2010generalizeddiagonalwythoffnim}.
Using this notation \wythoff{} is Linear Nimhoff with
this rule set: $\{(1,0),(0,1),(1,1)\}$.

It is natural to ask how other games in this class behave when
altered.  That is, will the phenomena that we observe in \wythoff{}
also occur in other Linear Nimhoff games?  We can summarize our
experimental results as follows:

\begin{enumerate}
\item We were unable to observe the phenomenon of defects in any other
  game.  That is, we looked for another game where the pattern of
  P-positions in the altered game gets closer and closer to the
  un-altered game as we move to larger and larger coordinates.  But we
  never found this in any game other than \wythoff{}.
\item On the other hand, the overall geometrical structure of the game
  does not seem to be changed when alterations are applied.  So for
  example if the original game has beams along roughly straight lines
  (these are the \textit{strict class} of games as defined
  in~\cite{geometric-analysis:2017}) then any finite alteration
  asymptotically preserves the same lines.
\end{enumerate}

To illustrate the first point, consider the Linear Nimhoff game
defined by this set of rules, which differs very slightly from those
of \wythoff{}: $\{(1,0), (0,2) (1,1)\}$.  We have
proven~\cite{mirabel-thesis} that the P-positions are comprised
of two linear beams defined by the sets
$\{(\floor{n \cdot \Delta x_1}, \floor{n\cdot \Delta y_1})\} \mid n\geq 0\}$
and $\{(\floor{n \cdot \Delta x_2}, \floor{n\cdot \Delta y_2})\} \mid
n\geq 0\}$. Where
\begin{eqnarray*}
  \Delta x_1 & = & \frac{1}{\sqrt{3}} \\
  \Delta y_1 & = & 1 + {\frac{1}{\sqrt{3}}} \\
  \Delta x_2 & = & 2 + \sqrt{3} \\
  \Delta y_2 & = & 1 + \sqrt{3} \\  
\end{eqnarray*}

\begin{figure}[hbt!]
  \centering
  % these are derived from result10.8.ps and result 10.14.ps
  \includegraphics[width=3in]{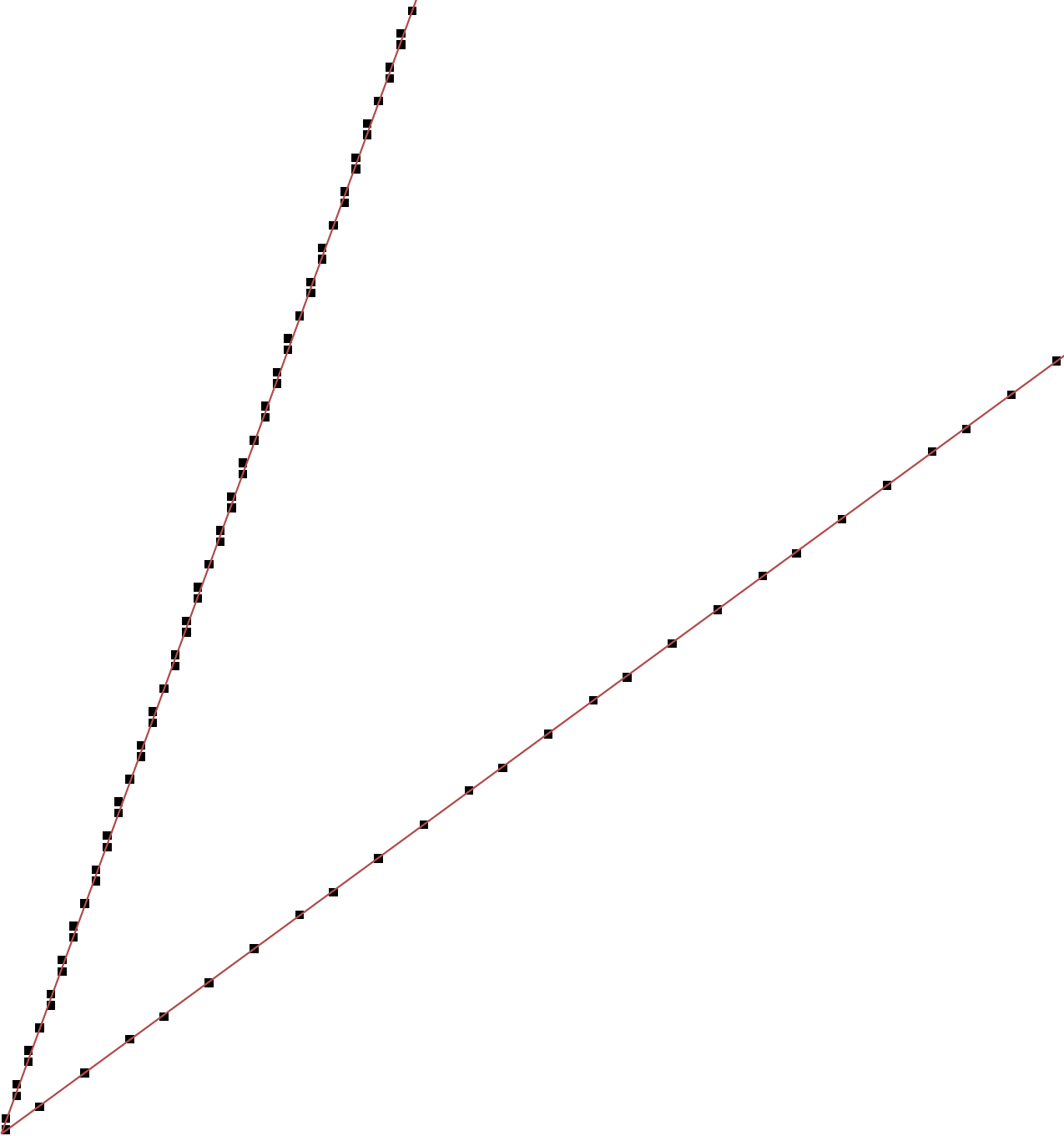}
  \includegraphics[width=3in]{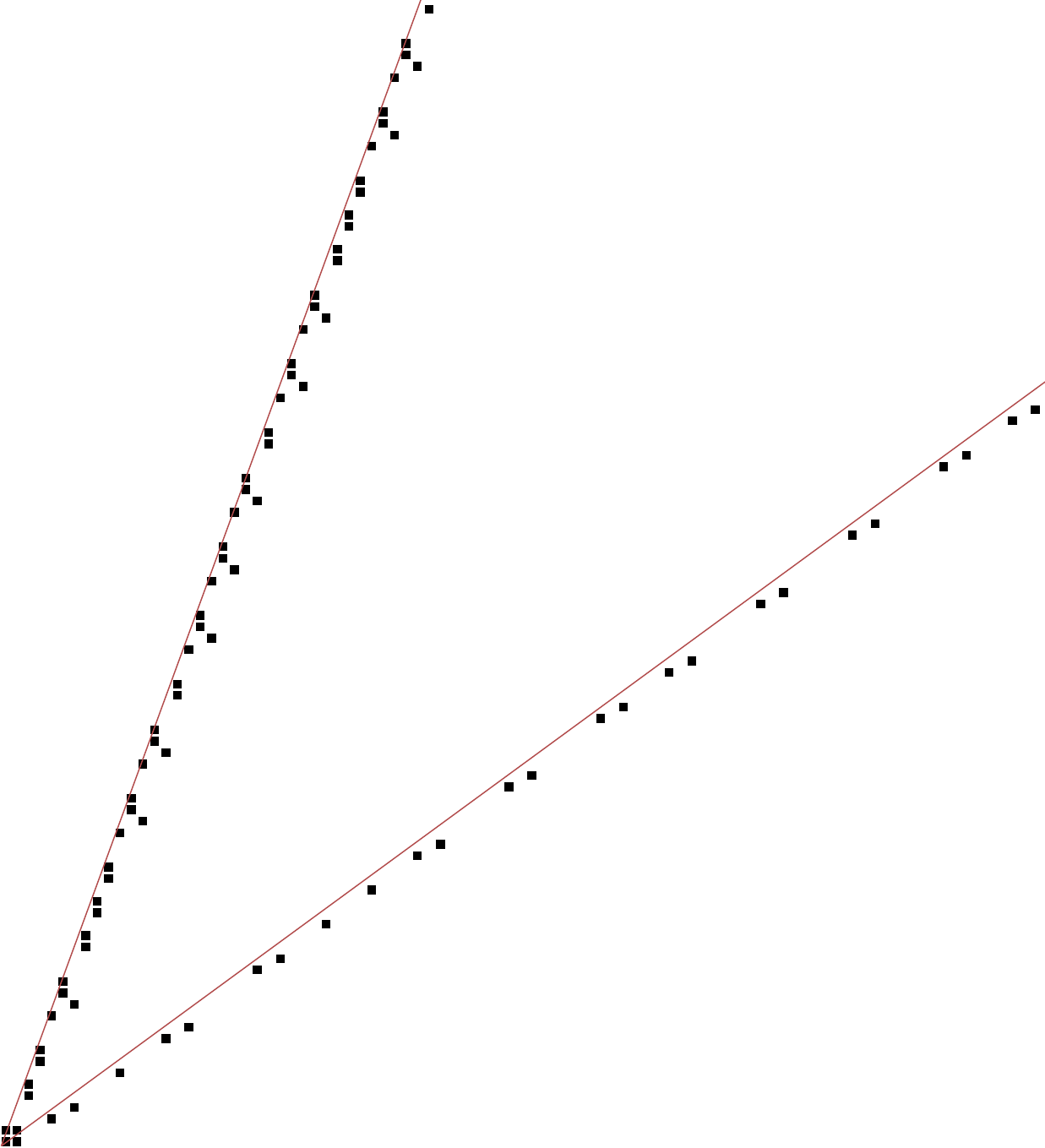}  
  \captionsetup{width=.9\linewidth}
  \caption{On the left is the Linear Nimhoff game with rules
    $\{(1,0), (0,2), (1,1)\}$.  On the right is an altered
    version of the same game with the lower left $2 \times 2$
    region labeled as P-positions. Notice that the differences between
    the two games (``defects'') do not peter out, but persist.}
  \label{fig:conc-nimhoff}
\end{figure}

Figure~\ref{fig:conc-nimhoff} shows the behavior of this game with a
small alteration.  The structure of the upper beam of the altered game
remains entirely different from that of the unaltered game, as far as
we've computed it.

\begin{figure}[hbt!]
  \centering
  % these come from result29.4.ps and result29.5.ps
  \includegraphics[width=3in]{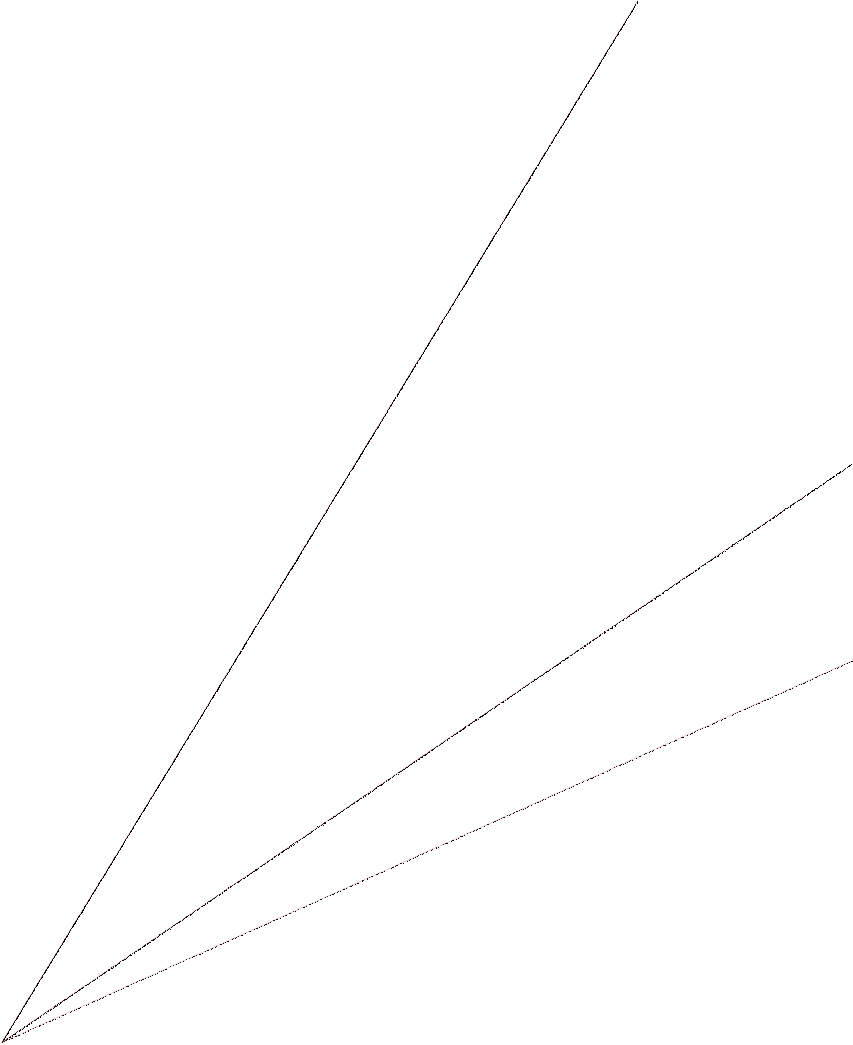}
  \includegraphics[width=3in]{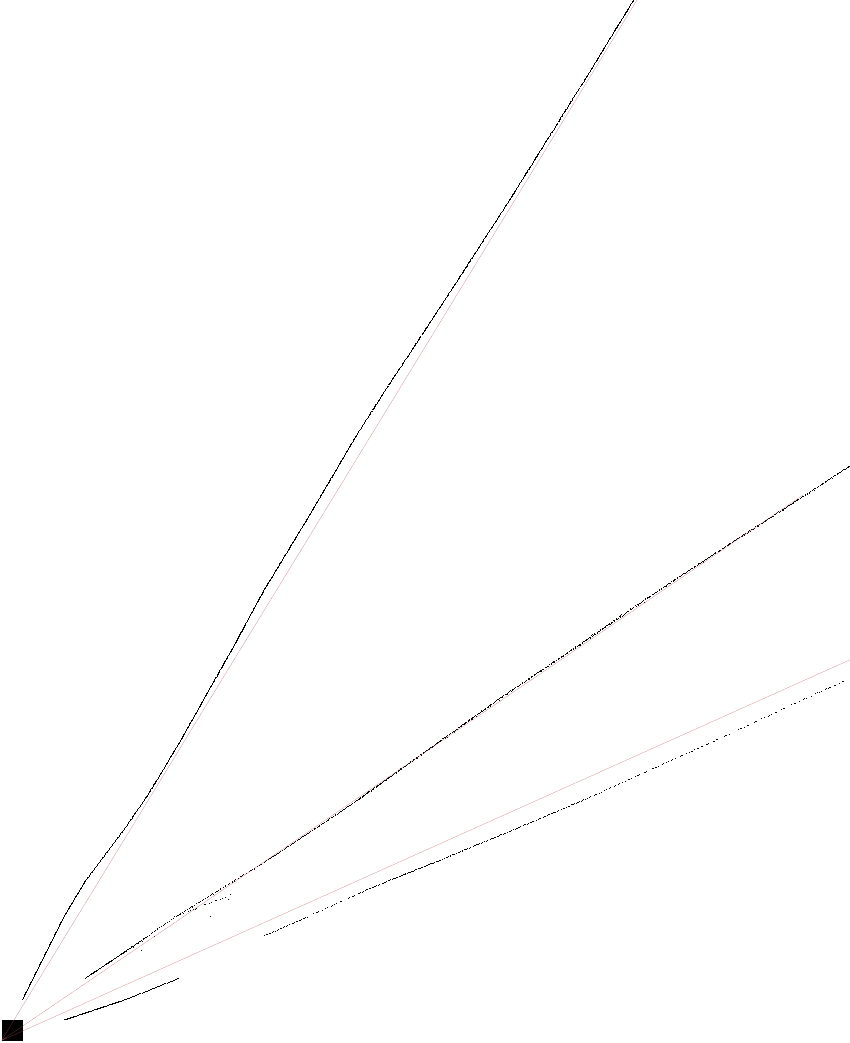}  
  \captionsetup{width=.9\linewidth}
  \caption{On the left is the Linear Nimhoff game with rules
    $\{(1,0), (0,1), (1,1), (2,1)\}$.  On the right is an altered
    version of the same game with the lower left $100\times 100$
    region labeled as P-positions.  The straight red lines drawn
    through the beams on the left side, are duplicated in the right
    side.}
  \label{fig:conclusion-nimhoff}
\end{figure}

The second point is illustrated in
Figure~\ref{fig:conclusion-nimhoff}.  Although initially different,
eventually the three beams of P-positions in the altered game conform
to the slopes of the corresponding beams in the un-altered game.

\clearpage

\bibliography{altered-wythoff}

\end{document}